\newtheorem{theorem}{Theorem}[section]
\newtheorem{prop}[theorem]{Proposition}
\newtheorem{lem}[theorem]{Lemma}
\newtheorem{coro}[theorem]{Corollary}
\newtheorem{thm}[theorem]{Theorem}
\newtheorem{rem}[theorem]{\rm\textsc{Remark}}
\newtheorem{exam}[theorem]{\rm\textsc{Example}}
\newtheorem{nc}[theorem]{\rm \textsc{Notation}}
\newcommand{\bslash}{\kern-0.1em\texttt{\scalebox{0.6}[1]{/}}\kern-0.15em \texttt{\scalebox{0.6}[1]{/}}}
\newcommand{\FF}{\mathbb{F}}
\newcommand{\R}{ \mathcal{R}}
\newcommand{\OO}{O_{2}^{+}(\Fq)}
\newcommand{\Oo}{O_{2}^{-}(\Fq)}
\newcommand{\GL}{{\rm GL}_{2}(\Fq)}
\newcommand{\N}{\mathbb{N}}
\newcommand{\J}{\mathfrak{J}}
\newcommand{\Fq}{\mathbb{F}_{q}}
\newcommand{\FqmV}{\mathbb{F}_{q}[x_{1},x_{2},\dots,x_{m},y_{1},y_{2},\dots,y_{m}]}
\newcommand{\va}{(\upalpha_{1},\upalpha_{2},\dots,\upalpha_{m})}
\newcommand{\xa}{x_{1}^{\upalpha_{1}}x_{2}^{\upalpha_{2}}\cdots x_{m}^{\upalpha_{m}}}
\newcommand{\ya}{y_{1}^{\upalpha_{1}}y_{2}^{\upalpha_{2}}\cdots y_{m}^{\upalpha_{m}}}
\newcommand{\xn}{x_{1},x_{2},\dots,x_{n}}
\newcommand{\ma}{{\rm max}}
\newcommand{\IR}{\Fq[mV]^{\OO}}
\newcommand{\Ir}{\Fq[mV]^{\Oo}}
\newcommand{\mo}{{\rm mod~}}
\newcommand{\h}{\mathfrak{h}_{mV}(\OO)}
\newcommand{\bet}{\upbeta} 
\newcommand{\gam}{\upgamma} 
\newcommand{\del}{\updelta}
\newcommand{\sig}{\upsigma}
\newcommand{\hbo}{$\hfill\Diamond$}
\begin{document}
\title{Vector invariants for two-dimensional orthogonal groups\\ over finite fields} 
\def\shorttitle{Vector invariants for two-dimensional orthogonal groups over finite fields}

\author{Yin Chen \orcidlink{0000-0002-7573-3343}}
\address{School of Mathematics and Statistics, Northeast Normal University,
 Changchun 130024, China and (Current address) Department of Finance and Management Science, University of Saskatchewan, Saskatoon, SK, Canada, S7N 5A7}
\email{yin.chen@usask.ca}

\begin{abstract}
Let $\Fq$ be a finite field of characteristic 2 and $\OO$ be the $2$-dimensional orthogonal group over $\Fq$. Consider the standard representation $V$ of $\OO$ and the 
ring of vector invariants $\IR$ for any $m\in \N^{+}$. We prove a first main theorem for $(\OO,V)$, i.e., we find a minimal generating set  for $\IR$. As a consequence, we derive the Noether number $\bet_{mV}(\OO)=\ma\{q-1,m\}$. We construct a free basis for $\Fq[2V]^{\OO}$ over a suitably chosen homogeneous system of parameters. We also 
obtain a generating set for the Hilbert ideal of $\IR$ which shows that the Hilbert ideal can be generated by invariants of degree $\leqslant q-1=\frac{|\OO|}{2}$, confirming the conjecture of Derksen and Kemper \cite[Conjecture 3.8.6 (b)]{DK02} in this particular case.
\end{abstract}

\date{\today}
\thanks{2020 \emph{Mathematics Subject Classification}. 13A50.}
\keywords{First main theorem; modular vector invariants; Reynolds operator; orthogonal groups.}
\maketitle \baselineskip=16pt

\dottedcontents{section}[1.16cm]{}{1.8em}{5pt}
\dottedcontents{subsection}[2.00cm]{}{2.7em}{5pt}

\section{Introduction}
\setcounter{equation}{0}
\renewcommand{\theequation}
{1.\arabic{equation}}
\setcounter{theorem}{0}
\renewcommand{\thetheorem}
{1.\arabic{theorem}}

\noindent  Let $\FF$ be a field, $G$ a finite group and $W$ be a faithful finite-dimensional representation of $G$ over $\FF$. The action of $G$ on $W$ induces a linear action  on the dual space $W^{\ast}$ by $\sig\cdot x=x\circ  \sig^{-1}$
for $\sig\in G$ and $x\in W^{\ast}$. Extending the action on $W^{\ast}$ multiplicatively yields an action of $G$ on $\FF[W]$,
the symmetric algebra on $W^{\ast}$. We choose $\{\xn\}$ as a basis of $W^{\ast}$. Then 
$\FF[W]$ can be identified with the polynomial ring $\FF[\xn]$. The subalgebra
$$\FF[W]^{G}:=\{f\in \FF[W]\mid \sig\cdot f=f,\textrm{ for all }\sig\in G\}$$
is called the \textit{invariant ring} of $G$ on $W$. 

Fix a representation $V$ of a finite group $G$ and consider $W=mV:=V\oplus V\oplus\cdots\oplus V$,  the direct sum of $m$ copies of $V$. Then $G$ acts on $W$ by extending diagonally the action on $V$.
Finding generators for the vector invariant ring $\FF[W]^{G}=\FF[mV]^{G}$ for a classical group $G$ (usually, $\FF$ is the field of complex numbers or the field of real numbers)  is the central  problem in classical invariant theory. 
According to H. Weyl's famous book \cite{Wey97}, a theorem giving a set of explicit generators for $\FF[mV]^{G}$ is referred to as a \textit{first main theorem} for $(G,V)$. 

The modular cases where the characteristic of $\FF$ divides the order of $G$ are more complicated.
In 1990, Richman \cite{Ric90} began the study of the vector invariants of $C_{p}$ acting on its two-dimensional indecomposable representation $V_{2}$ in characteristic $p>0$, giving a conjecture on generators for $\FF_{p}[mV_{2}]^{C_{p}}$ with a proof of the case $p=2$.
In 1997, Campbell and Hughes \cite{CH97} proved that Richman's conjecture was correct. In 2002, Shank and Wehlau \cite{SW02} gave a minimal generating set for $\mathbb{F}_{p}[mV_{2}]^{C_{p}}$. In 2010, Campbell, Shank and Wehlau \cite{CSW10} proved that the minimal generating set is actually a SAGBI basis for $\mathbb{F}_{p}[mV_{2}]^{C_{p}}$. In 2013, Wehlau \cite{Weh13} gave a new proof for Richman's conjecture via classical invariant theory. 
Recently, Bonnaf\'e and Kemper \cite{BK11}, Chen \cite{Che14, Che18}, Chen and Wehlau \cite{CW17}  also initiated a study of modular invariants of one vector and one covector for 
some linear groups over finite fields.

The present paper is devoted to study of the vector invariant ring for the two-dimensional orthogonal group of plus type over a finite field of characteristic $2$ with the standard representation.

The following theorem is our main result. 

\begin{thm} \label{T1}
Let $\Fq$ be a finite field of characteristic 2 and 
$\OO=\langle \upsigma,\uptau_{a}\rangle$
be the $2$-dimensional orthogonal group over $\Fq$ generated by
$$\sig=\begin{pmatrix}
      0 & 1 \\
     1  & 0\\
    \end{pmatrix}\textrm{ and }\uptau_{a}=\begin{pmatrix}
      a & 0 \\
     0  & a^{-1}\\
    \end{pmatrix},$$ where $a\in \Fq^{\times}.$ Suppose that  $\OO$
acts linearly on the polynomial ring
$$\Fq[mV]:=\FqmV$$  by 
$\sig(x_i)=y_i,\sig(y_i)=x_i$ and 
$\uptau_a(x_i)=a^{-1}\cdot x_i, \uptau_a(y_i)=a\cdot y_i
$
 for $1\leqslant i\leqslant m$. Then  
 $\IR$ is generated by 
\begin{eqnarray*}
\mathcal{N} &:= & \Big\{N_i=x_iy_i ~\big|~ 1\leqslant i\leqslant m\Big\}\\
\mathcal{U} &:=& \Big\{U_{ij}=x_iy_j+x_jy_i~\big|~  1\leqslant i<j\leqslant m\Big\}\\
\mathcal{B} &:=& \Big\{B_{\upalpha}=\xa+\ya ~\big|~  \upalpha\in\N^{m}, |\upalpha|=q-1\Big\}\\
\mathcal{D} &:=& \Big\{d_{I,J}=x_{I}\cdot y_{J}+y_{I}\cdot x_{J}~\big|~  \emptyset\neq I<J\subseteq \overline{m}, |J|-|I|=0 {\rm~~ or ~~}q-1\Big\},
\end{eqnarray*}  
where $|\upalpha|$, $d_{I,J}$ and $\overline{m}$ are defined as in   {\textsc{Notation \ref{nc}}} below. Moreover, $\IR$ is generated minimally by  $\mathcal{N} \cup \mathcal{B} \cup \mathcal{D}$.
\end{thm}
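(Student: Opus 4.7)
My plan is the standard two-step reduction along the semidirect product $\OO = T \rtimes \langle\sig\rangle$, where $T = \langle\tau_a : a\in\Fq^\times\rangle$ is the diagonal torus of order $q-1$; the conjugation $\sig\tau_a\sig^{-1}=\tau_{a^{-1}}$ shows $T \triangleleft \OO$, so $\IR = (\Fq[mV]^{T})^{\sig}$. The second stage then only has to cope with the order-$2$ quotient $\langle\sig T\rangle$ acting on the torus invariants, which keeps the modular complications manageable.

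The torus stage is direct: $\tau_a$ scales $\xa\yb$ by $a^{|\bet|-|\alp|}$, so $\Fq[mV]^{T}$ has $\Fq$-basis $\{\xa\yb : |\bet|\equiv|\alp|\pmod{q-1}\}$. The involution $\sig$ sends such a monomial to $x^{\bet}y^{\alp}$, whence $\IR$ has $\Fq$-basis consisting of the $\sig$-fixed monomials $x^{\alp}y^{\alp}$ together with the orbit sums $[\alp,\bet]:=\xa\yb+x^{\bet}y^{\alp}$ for $\alp\ne\bet$. Each fixed monomial lies in $\Fq[\mathcal{N}]$ since $x^{\alp}y^{\alp}=\prod_i N_i^{\alp_i}$. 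For an orbit sum, setting $\gam_i=\min(\alp_i,\bet_i)$ and factoring
\[
[\alp,\bet]\;=\;\Bigl(\prod_i N_i^{\gam_i}\Bigr)\cdot[\alp-\gam,\bet-\gam]
\]
reduces to the case where $\alp$ and $\bet$ have disjoint supports $I,J\subseteq\overline{m}$. The congruence then splits into (a) $|\alp|=|\bet|$, and (b) $|\bet|-|\alp|=k(q-1)$, $k\ge 1$ (up to exchange).

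I would run a double induction on the pair $(|\alp|+|\bet|,|I|+|J|)$, using three identities valid in characteristic $2$: the Frobenius law $B_\alp^{2}=B_{2\alp}$; the shift $N_i\cdot[\alp,\bet]=[\alp+e_i,\bet+e_i]$; and the product formula
\[
[\alp,\bet]\cdot[\alp',\bet'] \;=\; [\alp+\alp',\bet+\bet']+[\alp+\bet',\bet+\alp'],
\]
which specializes to $B_\alp B_\bet = B_{\alp+\bet}+[\alp,\bet]$. In family (b), the base case $k=1$, $\alp=0$ is exactly $B_\bet\in\mathcal{B}$; higher $k$ reduces by splitting $\bet=\bet^{(1)}+\bet^{(2)}$ into parts of size $q-1$ and invoking the specialization above, while the $\alp\ne 0$ subcase is handled by peeling off a $B_\del$-factor. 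In family (a), the base case with all exponents in $\{0,1\}$ is exactly $d_{I,J}\in\mathcal{D}$, and for larger exponents a suitable splitting $(\alp,\bet)=(\alp^{(1)}+\alp^{(2)},\bet^{(1)}+\bet^{(2)})$ combined with the product formula yields $[\alp,\bet]$ modulo a companion orbit sum of strictly smaller $(|\alp|+|\bet|,|I|+|J|)$, which is covered by the inductive hypothesis.

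For minimality of $\mathcal{N}\cup\mathcal{B}\cup\mathcal{D}$, first observe $U_{ij}=d_{\{i\},\{j\}}\in\mathcal{D}$, so $\mathcal{U}$ is absorbed. To see each remaining generator is indecomposable in $\IR_{+}/(\IR_{+})^{2}$, I would argue by a lexicographic leading-term analysis: $N_i$ is the unique degree-$2$ generator supported on $\{x_i,y_i\}$; $B_\alp$ is the unique generator contributing the pure-$x$ leading monomial $x^\alp$ at its degree; and each $d_{I,J}$ has leading monomial $x_I y_J$, which the product formula shows cannot appear as the leading term of a product of two strictly lower-degree $\OO$-invariants. The main obstacle is the family-(a) induction: the identity $U_{13}U_{24}=d_{\{1,2\},\{3,4\}}+d_{\{1,4\},\{2,3\}}$ already demonstrates that no single product isolates a single $d_{I,J}$, so the reduction step requires a careful $\mathbb{F}_2$-linear combination of several such products, and the combinatorial bookkeeping behind this cancellation is both the technical heart of generation and the structural reason the higher $d_{I,J}$ must appear in the minimal set.
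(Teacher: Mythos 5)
Your overall strategy is sound and genuinely different from the paper's. You invert the order of the two reductions: you take torus invariants first, obtaining the explicit monomial basis $\{x^{\alpha}y^{\beta} : |\beta|\equiv|\alpha| \ (\mathrm{mod}\ q-1)\}$ of $\Fq[mV]^{T}$, and then take $\sigma$-invariants, so that $\IR$ is spanned over $\Fq$ by the products $\prod_i N_i^{\alpha_i}$ and the orbit sums $x^{\alpha}y^{\beta}+x^{\beta}y^{\alpha}$. The paper instead takes invariants of the Sylow $2$-subgroup $P=\langle\sigma\rangle$ first, which requires importing Richman's theorem (Theorem \ref{Ric1990}) for $\Fq[mV]^{P}$, and then averages over the torus with the Reynolds operator, checking that the images of the Richman generators land in the ideal generated by the proposed set. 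Your route buys self-containedness and an explicit $\Fq$-basis of $\IR$, at the price of rewriting every orbit sum in the generators by hand; the identities you list (the shift by $N_i$, the product formula, the expansion of $B_{\alpha}B_{\beta}$) are essentially the ones the paper deploys in Lemmas \ref{5.2}--\ref{5.4}, so the two arguments converge at the computational level.

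Two steps as written do not go through. First, in your family (a) the induction is on $(|\alpha|+|\beta|,|I|+|J|)$ and the declared base case, ``all exponents in $\{0,1\}$ is exactly $d_{I,J}\in\mathcal{D}$,'' is false unless $I<J$ or $J<I$: for disjoint but interleaved supports, e.g.\ $I=\{1,4\}$, $J=\{2,3\}$, the orbit sum $x_1x_4y_2y_3+y_1y_4x_2x_3$ is not a generator and must itself be decomposed. The identity you quote, $U_{13}U_{24}=d_{\{1,2\},\{3,4\}}+d_{\{1,4\},\{2,3\}}$, handles this instance, but the general swap $d_{I,J}=U_{ji}\cdot d_{I\setminus\{i\},\,J\setminus\{j\}}+d_{(I\setminus\{i\})\cup\{j\},\,\{i\}\cup(J\setminus\{j\})}$ produces a companion term of the \emph{same} degree and the \emph{same} $|I|+|J|$, so your induction measure does not decrease on precisely the step you call the technical heart; you need an additional strictly decreasing quantity such as $\sum_{i\in I}i$ (this is what makes the ``proceeding in this fashion'' in the paper's Lemma \ref{5.3} terminate). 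Second, your minimality argument for $\mathcal{D}$ via lexicographic leading monomials fails: $d_{\{1,2\},\{3,4\}}$ and $U_{13}U_{24}$ share the leading monomial $x_1x_2y_3y_4$, so the leading term of $d_{I,J}$ can very well occur as the leading term of a product of lower-degree invariants --- your own displayed identity is the counterexample to your claim. Indecomposability of $d_{I,J}$ with $|I|+|J|>2$ needs a different argument; the paper obtains it from the $P$-equivariant change of variables $x_j\leftrightarrow y_j$ for $j\in J$, which carries $d_{I,J}$ to $x_{I\cup J}+y_{I\cup J}$ and reduces the question to Shank--Wehlau's minimal generation result for $\Fq[mV]^{P}$.
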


 \begin{rem} {\rm
Note that $\mathcal{U}$ is contained in $\mathcal{D}$. We pay a special attention to $U_{ij}$ because they will play an important role in our arguments below.  
\hbo}\end{rem}

Recall that the polynomial ring  $\FF[W]=\bigoplus_{d=0}^{\infty}\FF[W]_{d}$ is standard $\N$-graded and $G$ preserves the degrees. Thus
the invariant ring $\FF[W]^{G}=\bigoplus_{d=0}^{\infty}\FF[W]_{d}^{G}$ is also standard $\N$-graded. The number
$$\bet_{W}(G):=\textrm{min}\Big\{e~\big|~ \FF[W]^{G}\textrm{ is generated by }\bigoplus_{d=0}^{e}\FF[W]_{d}^{G}\Big\}$$
is called the \textit{Noether number} for $(G,W)$. As a   consequence of Theorem \ref{T1}, we derive

 \begin{coro} \label{noethernumber}
$\bet_{mV}(\OO)=\ma\{q-1,m\}$, for any $m\in \N^{+}$.
  \end{coro}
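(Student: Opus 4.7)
The approach is to read off the Noether number directly from the minimal generating set supplied by Theorem \ref{T1}. Recall that $\bet_{mV}(\OO)$ equals the maximum degree of any element in any minimal generating set of $\IR$. Since Theorem \ref{T1} provides the explicit minimal generating set $\mathcal{N} \cup \mathcal{B} \cup \mathcal{D}$, the proof reduces to computing the maximum degree of its elements.

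The elements $N_i \in \mathcal{N}$ have degree $2$, and each $B_\alpha \in \mathcal{B}$ has degree $q-1$. For $d_{I,J} \in \mathcal{D}$, the degree equals $|I|+|J|$. The crucial bound comes from the identity
\[
d_{I,J} \;=\; x_k y_k\bigl(x_{I\setminus\{k\}} y_{J\setminus\{k\}} + y_{I\setminus\{k\}} x_{J\setminus\{k\}}\bigr) \;=\; N_k \cdot d_{I\setminus\{k\}, J\setminus\{k\}},
\]
valid whenever $k \in I \cap J$ (with the right-hand factor interpreted as a suitable $B_\gamma$ when $|I|=1$ or $|J|=1$). This exhibits $d_{I,J}$ as decomposable whenever $I \cap J \neq \emptyset$, so by minimality every $d_{I,J}$ in the minimal set satisfies $I \cap J = \emptyset$, yielding $\deg(d_{I,J}) = |I|+|J| \leqslant m$. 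Consequently the upper bound $\bet_{mV}(\OO) \leqslant \ma\{q-1, m\}$ follows.

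For the matching lower bound I would exhibit an indecomposable generator of degree exactly $\ma\{q-1, m\}$. If $q-1 \geqslant m$, then any $B_\alpha$ (for instance $B_{(q-1, 0, \dots, 0)} = x_1^{q-1} + y_1^{q-1}$) lies in $\mathcal{B}$ and has degree $q-1$. If $m \geqslant q$, I split on the parity of $m$: when $m$ is even, take $I, J$ with $|I|=|J|=m/2$ and $I \sqcup J = \overline{m}$, giving $d_{I,J} \in \mathcal{D}$ of degree $m$; when $m$ is odd, use that $q-1$ is odd (since $q$ is a power of $2$) to take $|I| = (m-q+1)/2$ and $|J| = (m+q-1)/2$, again with $I \sqcup J = \overline{m}$ and $|J|-|I| = q-1$, producing a $d_{I,J} \in \mathcal{D}$ of degree $m$.

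The main obstacle will be the bookkeeping of parity conditions and verifying that the constructed pairs $(I,J)$ satisfy all the constraints of $\mathcal{D}$ (nonemptiness of $I$, disjointness, membership in $\overline{m}$, and the order condition $I < J$) in each subcase; the elementary factorization identity for overlapping pairs, though simple, is essential since it is what converts the combinatorial restriction $I \cap J = \emptyset$ into the actual numerical bound $|I|+|J| \leqslant m$.
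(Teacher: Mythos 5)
Your proposal is correct and follows essentially the same route as the paper: the upper bound is read off from the degrees of the minimal generators $\mathcal{N}\cup\mathcal{B}\cup\mathcal{D}$ of Theorem \ref{T1}, and the lower bound is obtained by exhibiting a $B_{\alpha}$ when $m\leqslant q-1$ and, when $m>q-1$, a $d_{I,J}$ with $I\sqcup J=\overline{m}$ chosen according to the parity of $m$ (exactly the paper's choices $|I|=m/2$ resp.\ $|I|=(m-q+1)/2$). The only superfluous step is your decomposability argument for overlapping pairs: the condition $I<J$ in the definition of $\mathcal{D}$ already forces $I\cap J=\emptyset$, so $\deg d_{I,J}=|I|+|J|\leqslant m$ holds by definition.
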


 \begin{rem} {\rm
It is worth noting that Symonds in \cite[Corollary 0.2]{Sym11} recently proved that  for any modular representation $W$ of a finite group $G$, $\bet_{W}(G)\leqslant\dim(W)(|G|-1)$. 
For other finite classical groups, we just know that \cite[Corollary 8.5]{CSW10} gives an upper bound for the Noether number $\bet_{mV_{2}}(\textrm{SL}_{2}(\FF_{p}))$, where $V_{2}$ is the standard representation of $\textrm{SL}_{2}(\FF_{p})$.
 \hbo}\end{rem}

 \begin{exam}\label{e1.5}
 {\rm ($m=2$)
Consider the set $\mathcal{D}$. Note that
$1\leqslant|I|\leqslant m-1$ and $1\leqslant|J|\leqslant m-1$.
In this case,  we must have $|I|=|J|=1$. Since $I<J$, then $I=\{1\}$ and $J=\{2\}$. Thus $\mathcal{U}=\mathcal{D}$.
 Theorem \ref{T1} indicates that  $\mathbb{F}_{q}[2V]^{\OO}=\mathbb{F}_{q}[x_{1},x_{2},y_{1},y_{2}]^{\OO}$ is generated by $q+3$ invariants: $N_1=x_1y_1,N_2=x_2y_2,U_{12}=x_1y_2+x_2y_1$, and $B_{k}=x_1^kx_2^{q-1-k}+y_1^ky_2^{q-1-k}$ for $0\leqslant k\leqslant q-1$; See Section \ref{sec7} for more examples.
\hbo} \end{exam}

It follows from \cite[Proposition 16]{Kem96}  that 
$\mathbb{F}_{q}[2V]^{\OO\times\OO}$ is a polynomial algebra generated by $\{N_{1},N_{2},B_{0},B_{q-1}\}$.
Moreover,  by  \cite[Lemma 2.6.3]{CW11}, we see that $\{N_{1},N_{2},B_{0},B_{q-1}\}$ is a homogeneous system of parameters for $\mathbb{F}_{q}[2V]^{\OO}$. Note that the cyclic group  $C_{2}$ of order 2 is the Sylow $2$-subgroup of $\OO$ and $\mathbb{F}_{q}[2V]^{C_{2}}$ is a hypersurface (so Cohen-Macaulay) algebra; see \cite[Section 1.2]{CW11}). Thus it follows from \cite[Theorem 1]{CHP91}  that $\mathbb{F}_{q}[2V]^{\OO}$ is Cohen-Macaulay.

In this paper, we also construct a free basis for $\mathbb{F}_{q}[2V]^{\OO}$ over  $\mathbb{F}_{q}[2V]^{\OO\times\OO}$ by showing the following second result.

 \begin{thm}\label{T2} 
 The set
$$\Big\{U_{12}^{i}\mid 0\leqslant i\leqslant \frac{q}{2}\Big\}\cup \Big\{B_{k}\mid 1\leqslant k\leqslant q-2\Big\}\cup\Big\{B_{i}B_{j}\mid 1\leqslant i,j\leqslant q-2\textrm{ and }i+j=q-1\Big\}$$ is a basis for $\mathbb{F}_{q}[2V]^{\OO}$ as a free $\mathbb{F}_{q}[2V]^{\OO\times\OO}$-module.
 \end{thm}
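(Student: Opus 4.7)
The plan is to exploit the Cohen--Macaulay property of $\mathbb{F}_q[2V]^{O_2^+}$ established in the excerpt and reduce the theorem to a spanning argument. Set $R:=\mathbb{F}_q[N_1,N_2,B_0,B_{q-1}]=\mathbb{F}_q[2V]^{O_2^+\times O_2^+}$. Since $\{N_1,N_2,B_0,B_{q-1}\}$ is a homogeneous system of parameters for $\mathbb{F}_q[2V]^{O_2^+}$ and the latter is Cohen--Macaulay, $\mathbb{F}_q[2V]^{O_2^+}$ is a graded free $R$-module of rank $[\mathbb{F}_q(2V)^{O_2^+}:\mathbb{F}_q(2V)^{O_2^+\times O_2^+}]=|O_2^+|=2(q-1)$. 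A direct count verifies that the proposed set has exactly $(q/2+1)+(q-2)+(q-2)/2=2(q-1)$ elements, matching the rank. Once this set is shown to generate $\mathbb{F}_q[2V]^{O_2^+}$ as an $R$-module, the induced graded surjection $\bigoplus_{s}R(-\deg s)\twoheadrightarrow\mathbb{F}_q[2V]^{O_2^+}$ has source and target with identical Hilbert-series numerators (both are polynomials with nonnegative coefficients whose values at $t=1$ equal $2(q-1)$ and which are comparable coefficient-wise), forcing the kernel to vanish and the set to be a basis. It therefore suffices to prove spanning.

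For spanning I would start from the algebra generators $R\cup\{U_{12},B_1,\ldots,B_{q-2}\}$ provided by Theorem~\ref{T1} at $m=2$ and reduce every monomial in the non-$R$ generators to an $R$-combination of the proposed basis elements. Writing $\alpha:=x_1y_2$ and $\beta:=x_2y_1$, so that $U_{12}=\alpha+\beta$ and $N_1N_2=\alpha\beta$, the characteristic-$2$ Newton recursion
\begin{equation*}
\alpha^k+\beta^k=U_{12}(\alpha^{k-1}+\beta^{k-1})+N_1N_2(\alpha^{k-2}+\beta^{k-2})
\end{equation*}
together with the Frobenius identities $U_{12}^{2^r}=\alpha^{2^r}+\beta^{2^r}$ supply the algebraic machinery. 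I would then derive three families of reductions: (i) a relation expressing $U_{12}^{q/2+1}$ as an $R$-combination of the basis elements; (ii) reductions of the mixed products $U_{12}^i\cdot B_k$ for $1\leqslant i\leqslant q/2$ and $1\leqslant k\leqslant q-2$; and (iii) reductions of $B_iB_j$ with $i+j\neq q-1$ via index-shift identities of the form $B_iB_j+B_{i+1}B_{j-1}\in R\cdot\{B_l\}+R\cdot\{B_lB_{q-1-l}\}$ obtained by expansion and by pairing off cross-terms divisible by $N_1N_2$; iterating the shift drives the product down to $B_0B_{i+j}$ or $B_{q-1}B_{i+j-(q-1)}$, which lies in $R\cdot\{B_l\}$. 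Triple and higher products then reduce inductively.

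The main obstacle is (i) and (iii). Identity (i) cannot be a pure recursion in $U_{12}$, since the minimal polynomial of $U_{12}$ over $R$ has degree strictly larger than $q/2+1$ for $q\geqslant 4$; the relation must absorb contributions from the $B_k$ and $B_iB_{q-1-i}$ parts of the basis, and identifying them requires careful characteristic-$2$ bookkeeping for the expansion of $(\alpha+\beta)^{q/2+1}$. For (iii), one has to verify combinatorially that the errors produced by the index shifts land within $R\cdot(\{B_l\}\cup\{B_lB_{q-1-l}\})$ and do not leak outside the prescribed span. Once (i)--(iii) are established, the cardinality count and the Hilbert-series argument complete the proof.
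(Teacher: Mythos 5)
Your high-level architecture coincides with the paper's Section 6: freeness of $\Fq[2V]^{\OO}$ over $R=\Fq[N_1,N_2,B_0,B_{q-1}]$ of rank $2(q-1)$, the cardinality count $2(q-1)$, and the reduction of the whole problem to spanning via the $m=2$ generators from Theorem \ref{T1}. Your step (ii) is realized in the paper by the identity $B_kU_{12}=N_2B_{k+1}+N_1B_{k-1}$ (Proposition \ref{6.1}) and your step (iii) by Lemma \ref{6.3} and Proposition \ref{6.4}, so those parts of your sketch are fillable in principle.

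The genuine gap is your step (i), and it is fatal rather than merely technical. You are right to doubt a pure recursion: the paper's Proposition \ref{6.2} asserts $U_{12}^{q/2+1}\in\sum_{i\leqslant q/2}\Fq[N_1,N_2]U_{12}^i$, but its base case $U_{12}^3=U_{12}^2+N_1N_2U_{12}$ for $q=4$ equates polynomials of degrees $6$ and $4$, and in general the monomial $x_1^{q/2+1}y_2^{q/2+1}$ of $U_{12}^{q/2+1}$ occurs in no $N_1^aN_2^bU_{12}^i$ with $i\leqslant q/2$. However, your proposed repair --- absorbing the discrepancy into the $B_k$ and $B_iB_{q-1-i}$ parts of the basis --- only works for $q=4$, where $\deg U_{12}^{3}=6=\deg B_1B_2$ and indeed $U_{12}^{3}=B_0B_3+B_1B_2$. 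For $q\geqslant 8$ a degree count rules out any relation of the kind you need: $U_{12}^{q/2+1}$ has degree $q+2$; the summands $R\cdot B_k$ meet that degree only through $R_3=0$, and $R\cdot B_iB_j$ only from degree $2(q-1)>q+2$ onward, so the degree-$(q+2)$ piece of the proposed span is exactly $\bigoplus_{i=0}^{q/2}\Fq[N_1,N_2]_{q+2-2i}U_{12}^i$, of dimension $\binom{q/2+3}{2}-1$, whereas $\dim\big(\Fq[2V]^{\OO}\big)_{q+2}=\binom{q/2+3}{2}$: the monomials $N_1^aN_2^bU_{12}^c$ with $a+b+c=q/2+1$ are linearly independent invariants spanning that graded piece, and the one with $c=q/2+1$ is unreachable. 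Hence for $q\geqslant 8$ the displayed set does not even span, no identity (i) exists, and the statement itself must be amended (the Hilbert series forces the $(q-2)/2$ basis elements placed in degree $2(q-1)$ to be redistributed, one into each of the degrees $q+2,q+4,\dots,2(q-1)$). Your plan, like the paper's own Proposition \ref{6.2}, can only be carried through in the case $q=4$.
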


The Hilbert ideal $\mathfrak{h}_{W}(G)$ associated with an invariant ring  $\FF[W]^{G}$ is the ideal in $\FF[W]$ generated by 
all invariants of positive degree, namely, $\mathfrak{h}_{W}(G)=(\FF[W]^{G}_{+})\FF[W]$.
Derksen and Kemper  have conjectured  that  
 $\mathfrak{h}_{W}(G)$ can be generated by invariants of degree $\leqslant |G|$ for any finite group $G$ and any representation $W$; see \cite[Conjecture 3.8.6 (b)]{DK02}.

The third purpose of this paper is to find a generating set of $\h$. The following Theorem \ref{T3} shows 
that $\h$ can be generated by invariants of degree $\leqslant q-1=\frac{|\OO|}{2}$, confirming the conjecture of
Derksen and Kemper  in this particular case. 

 \begin{thm}\label{T3}
The Hilbert ideal $\h$ can be generated by $\mathcal{N}\cup\mathcal{U}\cup\mathcal{B}$.
 \end{thm}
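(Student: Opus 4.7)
The plan is to leverage Theorem \ref{T1}. Since $\mathcal{N}\cup\mathcal{U}\cup\mathcal{B}\cup\mathcal{D}$ generates $\IR$ as an $\Fq$-algebra, the same set generates $\h$ as an ideal of $\Fq[mV]$, so it is enough to show that every $d_{I,J}\in\mathcal{D}$ already lies in the ideal $\mathfrak{a}:=(\mathcal{N}\cup\mathcal{U}\cup\mathcal{B})\cdot\Fq[mV]$. The definition of $\mathcal{D}$ splits into two cases, $|J|-|I|=0$ and $|J|-|I|=q-1$, which I would treat in turn.

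For the equal-size case $|I|=|J|=k$, I would argue by induction on $k$. The base $k=1$ is immediate because $d_{\{i\},\{j\}}=U_{ij}\in\mathcal{U}$. For the inductive step, fix $i_1\in I$ and $j_1\in J$, set $I'=I\setminus\{i_1\}$ and $J'=J\setminus\{j_1\}$, and expand: the two products $x_{i_1}y_{j_1}\cdot d_{I',J'}$ and $y_{I'}x_{J'}\cdot U_{i_1,j_1}$ each produce a ``wanted'' monomial of $d_{I,J}$ together with a single cross term, and the two cross terms coincide, so in characteristic $2$ they cancel. This gives the identity
\begin{equation*}
d_{I,J}\;=\;x_{i_1}y_{j_1}\cdot d_{I',J'}\;+\;y_{I'}x_{J'}\cdot U_{i_1,j_1},
\end{equation*}
and the inductive hypothesis together with $U_{i_1,j_1}\in\mathcal{U}$ yields $d_{I,J}\in\mathfrak{a}$.

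For the mixed-size case $|J|=|I|+q-1$, I would peel off a degree-$(q-1)$ piece from $J$. Choose any $J''\subseteq J$ of cardinality $q-1$, put $J'=J\setminus J''$ (so $|J'|=|I|$), and let $\alpha\in\N^{m}$ be the indicator vector of $J''$, i.e. $\alpha_i=1$ if $i\in J''$ and $\alpha_i=0$ otherwise. Then $|\alpha|=q-1$ and $B_{\alpha}=x_{J''}+y_{J''}\in\mathcal{B}$. A similar char-$2$ expansion yields
\begin{equation*}
d_{I,J}\;=\;x_{I}\,y_{J'}\cdot B_{\alpha}\;+\;x_{J''}\cdot d_{I,J'}.
\end{equation*}
Since $|I|=|J'|$, the element $d_{I,J'}$ (which is zero if $I=J'$, otherwise belongs to $\mathcal{D}$ after reordering) has already been placed in $\mathfrak{a}$ by the equal-size case, so $d_{I,J}\in\mathfrak{a}$.

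The main obstacle I expect is spotting these two telescoping identities, especially the mixed-size one, which must extract a $B_{\alpha}$ factor cleanly out of a $d$-expression. Both identities depend essentially on $\mathrm{char}\,\Fq=2$: the unwanted cross terms cancel only because $2=0$. Once the identities are written down, the rest of the proof is a short two-step reduction, and the theorem follows.
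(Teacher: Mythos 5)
Your proposal is correct and follows essentially the same route as the paper: both reduce the problem to showing $\mathcal{D}$ lies in the ideal generated by $\mathcal{N}\cup\mathcal{U}\cup\mathcal{B}$, handle the case $|I|=|J|$ by induction after peeling off a factor $U_{i_1 j_1}$, and handle the case $|J|=|I|+q-1$ by extracting a degree-$(q-1)$ element of $\mathcal{B}$ to reduce to the equal-size case. Your two telescoping identities are the mirror images (swapping the roles of the $x$- and $y$-monomials in the coefficients) of the congruences used in the paper's two subcases, so the arguments coincide in substance.
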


This paper is organized as follows:  Section \ref{sec2} contains preliminaries and some basic constructions.  Our main lemmas, which explain the main idea in the proof of Theorem \ref{T1}, are contained in Section \ref{sec3}. Section \ref{sec4}, together with several technical lemmas in Section \ref{sec5}, gives a complete proof of Theorem \ref{T1}.
Section \ref{sec6}  is devoted to giving a proof of Theorem \ref{T2}. In Section \ref{sec7}, we provide more examples to 
illustrate how large the number of generators in Theorem \ref{T1} is; a proof of Corollary \ref{noethernumber} is also given. Section \ref{sec8}  contains a proof of Theorem \ref{T3}.
In Section \ref{sec9}, we discuss  the orthogonal group of minus type $\Oo$, and its invariant ring $\Ir$.

We close this introductory section with some notation and conventions. 

\begin{nc}\label{nc}
{\rm We always assume that  $\Fq$ is a finite field of characteristic $2$. We define $\overline{m}:=\big\{1,2,\dots,m\big\}$.
The Greek letters, $\upalpha,\bet,\dots$, denote vectors in $\N^{m}$.
For any vector $$\upalpha=\va\in\N^{m},$$ we define $|\upalpha|:=\sum_{i=1}^{m}\upalpha_{i}$  and 
\begin{equation}
\label{ }
B_{\upalpha}  :=  \xa+\ya.
\end{equation}

Let $I\subseteq \overline{m}$ be a nonempty subset and $\upalpha=\va\in\N^{m}$ be any vector with $\upalpha_{i}=0$ for all $i\notin I$. We define $x^{\upalpha}_{I} := \prod_{i\in I} x_{i}^{\upalpha_{i}}\in\Fq[mV]$. Similarly, we define 
$y^{\bet}_{J} :=  \prod_{j\in J} y_{j}^{\bet_{j}} \in\Fq[mV]$ for a nonempty subset $J\subseteq \overline{m}$ and any vector $\bet$ with $\bet_{j}=0$ for all $j\notin J$. We also define
 \begin{eqnarray}
d_{I,J}(\upalpha,\bet)&:=&x^{\upalpha}_{I}\cdot y^{\bet}_{J}+y^{\upalpha}_{I}\cdot x^{\bet}_{J}\\
d_{I,J}&:=&d_{I,J}(\overline{1},\overline{1})=x_{I}\cdot y_{J}+y_{I}\cdot x_{J}
\end{eqnarray} 
where $\overline{1}$ is the vector whose the $i$-th component is 1 for every $i\in I$ (or $J$) and other components are zero.
Given  two nonempty subsets $I,J\subseteq \overline{m}$, we say that \textit{$I$ is less than $J$}, denoted $I<J$, if $i<j$ for all $i\in I$ and all $j\in J$.
\hbo}\end{nc}

\section{Preliminaries} \label{sec2}
\setcounter{equation}{0}
\renewcommand{\theequation}
{2.\arabic{equation}}
\setcounter{theorem}{0}
\renewcommand{\thetheorem}
{2.\arabic{theorem}}

\noindent Let $\Fq$ denote a finite field of characteristic $2$. Recall that
a square matrix $A=(a_{ij})$ over any field $k$ is said to be \textit{alternate} if $a_{ij}=-a_{ji}$ and $a_{ii}=0$. 
Thus a square matrix over $\Fq$ is alternate if and only if it is symmetric with diagonals zero. Suppose  $A$ and $B$ are two $n\times n$ matrices over $\Fq$. We say that $A$ is \textit{congruent} to $B$, denoted  $A\equiv B$,  if $A-B$ is an alternate matrix.
We choose a fixed element $w\notin\{x^{2}+x\mid x\in \Fq\}$. 
It is well-known that the two-dimensional \textit{orthogonal groups}, up to isomorphism, are just the following two types:
    \begin{eqnarray}
\OO&=&\Big\{T\in \GL\mid T\cdot O^{+}\cdot T'\equiv O^{+}\Big\}\\
\Oo&=&\Big\{T\in \GL\mid T\cdot O^{-}\cdot T'\equiv O^{-}\Big\}
\end{eqnarray}
  where 
 $O^{+}=\begin{pmatrix}
      0 & 1 \\
     0  & 0\\
    \end{pmatrix}$ and $O^{-}=\begin{pmatrix}
      w&1\\
       0&w
    \end{pmatrix}
  $; see for example, \cite[page 188]{TW06} or \cite{Wan93}. Note that 
  $|\OO|=2(q-1)$ and  $|\Oo|=2(q+1).$

 \begin{rem}{\rm
 The  $2$-dimensional orthogonal groups over a finite field $\mathbb{F}_{q}$ of characteristic $p>2$ have also two isomorphism classes: $\OO$ and $\Oo$, with the orders $2(q-1)$ and $2(q+1)$ respectively. Since $p$ does not divide $2(q-1)$ and $2(q+1)$,  the invariants for $\OO$ and $\Oo$ with the standard representations are nonmodular. In this case many classical tools, such as Molien's formula and Noether's bound theorem, can be applied. Thus we ignore this case and emphasize the modular case: char$(\Fq)=2$.
We also refer to  \cite[page 213]{NS02}, which discusses the generator problem for the invariant ring of $O_{2}^{\pm}(\FF_{p})$ in the nonmodular case.
 \hbo }\end{rem}  

From now on we always assume that char$(\Fq)=2$ and $q=2^{s}$ with $s\geqslant 2$. The orthogonal group of plus type $\OO$ is generated by 
\begin{equation}
\label{ }
\sig:=\begin{pmatrix}
      0 & 1 \\
     1  & 0\\
    \end{pmatrix}\textrm{ and } \uptau_a:=\begin{pmatrix}
      a & 0 \\
     0  & a^{-1}\\
    \end{pmatrix},
\end{equation}
where $a\in \Fq^{\times}.$ Let $V$ denote the 2-dimensional standard representation of 
    $\OO$ over $\Fq$ and $\OO$ act on $mV$ diagonally.
The action of $\OO$ on $\Fq[mV]:=\FqmV$ is given by
  \begin{eqnarray}
\sig(x_i)=y_i,&&\sig(y_i)=x_i\\
 \uptau_a(x_i)=a^{-1}\cdot x_i,& &  \uptau_a(y_i)=a\cdot y_i
\end{eqnarray}
for  $1\leqslant i\leqslant m$.

\begin{prop}   
$B_{\upalpha}\in\IR$ if and only if $q-1$ divides $|\upalpha|$.
\end{prop}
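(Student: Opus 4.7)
The plan is to verify invariance separately under each of the two generators $\sig$ and $\tau_a$ of $\OO$, and then observe that the condition on $\tau_a$ forces the divisibility condition on $|\alpha|$.

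First I would treat $\sig$. Because $\sig$ simply exchanges $x_i$ and $y_i$ for every $i$, we have
\[
\sig(B_{\alpha})=\sig(\xa)+\sig(\ya)=\ya+\xa=B_{\alpha},
\]
so $B_{\alpha}$ is $\sig$-invariant with no restriction on $\alpha$ at all. Hence the content of the proposition lies entirely in checking invariance under each diagonal generator $\tau_a$, $a\in\Fq^{\times}$.

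Next I would compute $\tau_a(B_{\alpha})$ directly using $\tau_a(x_i)=a^{-1}x_i$ and $\tau_a(y_i)=ay_i$. A short calculation gives
\[
\tau_a(B_{\alpha})=a^{-|\alpha|}\xa+a^{|\alpha|}\ya.
\]
Assuming $\alpha\neq 0$ (the case $\alpha=0$ is trivial since then $B_{\alpha}=0$ in characteristic $2$ and $q-1\mid 0$), the monomials $\xa$ and $\ya$ involve disjoint sets of variables and are in particular linearly independent over $\Fq$. Therefore $\tau_a(B_{\alpha})=B_{\alpha}$ is equivalent to $a^{|\alpha|}=1$ (the other equation $a^{-|\alpha|}=1$ is the same condition).

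Finally, $B_{\alpha}\in\IR$ requires $\tau_a(B_{\alpha})=B_{\alpha}$ for every $a\in\Fq^{\times}$, i.e.\ $a^{|\alpha|}=1$ for all $a\in\Fq^{\times}$. Since $\Fq^{\times}$ is cyclic of order $q-1$, this holds if and only if $(q-1)\mid|\alpha|$. Conversely, if $(q-1)\mid|\alpha|$ then $a^{|\alpha|}=1$ for every $a\in\Fq^{\times}$, so $B_{\alpha}$ is $\tau_a$-invariant for all $a$, and combined with $\sig$-invariance this gives $B_{\alpha}\in\IR$. There is no real obstacle here: the argument is a direct computation, and the only mild subtlety is to remark that $\xa$ and $\ya$ are linearly independent (so that the two terms in $\tau_a(B_{\alpha})$ cannot cancel or combine), which reduces invariance to the single scalar condition $a^{|\alpha|}=1$.
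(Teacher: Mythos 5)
Your proof is correct and follows essentially the same route as the paper: $\sig$-invariance is automatic, and invariance under every $\tau_a$ reduces to $a^{|\alpha|}=1$ for all $a\in\Fq^{\times}$, which is equivalent to $(q-1)\mid|\alpha|$. You are slightly more careful than the paper in noting the linear independence of $\xa$ and $\ya$ and in invoking the cyclicity of $\Fq^{\times}$, but the substance is identical.
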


\begin{proof}   
If $q-1$ divides $|\upalpha|$, then a direct calculation shows that $\sig(B_{\upalpha})=B_{\upalpha}=\uptau_{a}(B_{\upalpha})$. Thus $B_{\upalpha}$ is an  $\OO$-invariant. Conversely, since $\uptau_{a}(B_{\upalpha})=a^{-|\upalpha|}\cdot x_{I}^{\upalpha}+a^{|\upalpha|}\cdot y_{I}^{\upalpha}=x_{I}^{\upalpha}+y_{I}^{\upalpha}$, it follows that $a^{-|\upalpha|}-1=0=a^{|\upalpha|}-1.$ Hence, $q-1$ divides $|\upalpha|$.
\end{proof}

\begin{prop}   \label{2.3}
The invariant ring
$\Fq[V]^{\OO}=\Fq[x,y]^{\OO}=\Fq[xy,x^{q-1}+y^{q-1}]$ is a polynomial algebra. 
\end{prop}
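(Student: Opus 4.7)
The plan is to compute $\Fq[x,y]^{\OO}$ in two stages, using the decomposition $\OO = T\rtimes\langle\sigma\rangle$, where $T=\{\tau_a:a\in\Fq^{\times}\}$ is the diagonal torus of order $q-1$.

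First I would determine $\Fq[x,y]^{T}$. A monomial $x^iy^j$ is fixed by $\tau_a$ iff $a^{j-i}=1$ for every $a\in\Fq^{\times}$, i.e.\ iff $(q-1)\mid(j-i)$. Every $T$-invariant monomial can therefore be written uniquely in exactly one of the forms $(xy)^i$, $(xy)^{i}x^{k(q-1)}$, or $(xy)^{i}y^{k(q-1)}$ with $i\ge 0$ and $k\ge 1$, according to whether $i=j$, $i>j$, or $i<j$. Hence any $T$-invariant $f$ admits a unique decomposition
\begin{equation*}
f = p(xy) + \sum_{k\ge 1}q_{k}(xy)\,x^{k(q-1)} + \sum_{k\ge 1}r_{k}(xy)\,y^{k(q-1)}
\end{equation*}
with $p,q_k,r_k\in\Fq[t]$.

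Next I would impose $\sigma$-invariance. Since $\sigma$ fixes $xy$ and swaps $x^{k(q-1)}$ with $y^{k(q-1)}$, the uniqueness of the normal form forces $q_k=r_k$ for all $k$, giving
\begin{equation*}
f = p(xy) + \sum_{k\ge 1}q_{k}(xy)\,\bigl(x^{k(q-1)}+y^{k(q-1)}\bigr).
\end{equation*}
Setting $A=x^{q-1}$, $B=y^{q-1}$, $s=A+B$, I note that $AB=(xy)^{q-1}$ lies in $\Fq[xy]$, so by Newton's identities each power sum $A^k+B^k$ is a polynomial in $s$ and $xy$. This shows $\Fq[x,y]^{\OO}\subseteq\Fq[xy,s]$; the reverse containment is immediate since $xy$ and $s$ are manifestly $\OO$-invariant.

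Finally, to see that $\Fq[xy,s]$ is a polynomial algebra, I would invoke the Jacobian criterion:
\begin{equation*}
\det\begin{pmatrix} y & x \\ (q-1)x^{q-2} & (q-1)y^{q-2}\end{pmatrix} = (q-1)\bigl(y^{q-1}-x^{q-1}\bigr),
\end{equation*}
which is nonzero in $\Fq[x,y]$ because $q=2^{s}$ makes $q-1$ odd, hence a unit in $\Fq$. Therefore $xy$ and $s$ are algebraically independent, and the proof is complete. There is no real obstacle: the only points requiring any care are the uniqueness of the normal form for $T$-invariants in the first step (which is just the observation that the three families of monomials listed are disjoint) and verifying that the characteristic-$2$ Jacobian survives, which it does precisely because $q-1$ is odd.
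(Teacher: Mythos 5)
Your proof is correct, but it takes a genuinely different route from the paper. The paper disposes of this proposition in one line by citing Kemper's criterion (\cite[Proposition 16]{Kem1996}): for two homogeneous invariants $f_1,f_2$ of a finite group acting on a two-dimensional space, if $\deg(f_1)\deg(f_2)=|G|$ and the Jacobian determinant of $(f_1,f_2)$ is nonzero, then $\FF[V]^G=\FF[f_1,f_2]$ is polynomial; here $\deg(xy)\cdot\deg(x^{q-1}+y^{q-1})=2(q-1)=|\OO|$ and the Jacobian is $(q-1)(x^{q-1}+y^{q-1})\neq 0$ since $q-1$ is odd. You instead compute the invariant ring directly: diagonalize the torus action to get a normal form for $\Fq[x,y]^{T}$, impose $\sigma$-invariance via uniqueness of that normal form, and reduce the resulting power sums $x^{k(q-1)}+y^{k(q-1)}$ to polynomials in $xy$ and $x^{q-1}+y^{q-1}$ by the two-variable Newton recurrence $p_k=e_1p_{k-1}-e_2p_{k-2}$, which is an integral identity and so causes no trouble in characteristic $2$. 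Your argument is longer but entirely self-contained and makes the structure of the invariants explicit (it essentially recovers the module decomposition that Kemper's degree count encodes numerically); note that your final Jacobian computation is precisely one of the two hypotheses of Kemper's criterion, so had you also observed the degree identity $2(q-1)=|\OO|$ you could have invoked that criterion and skipped the containment argument altogether. One small point of care that you handled correctly: the inclusion $\Fq[x,y]^{\OO}\subseteq\Fq[xy,\,x^{q-1}+y^{q-1}]$ does require the uniqueness of the $T$-normal form (the three monomial families are disjoint), since without it the coefficient matching under $\sigma$ would not be forced.
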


\begin{proof}   
It follows immediately from  \cite[Proposition 16]{Kem96}.
\end{proof}

\section{The Main Lemma} \label{sec3}
\setcounter{equation}{0}
\renewcommand{\theequation}
{3.\arabic{equation}}
\setcounter{theorem}{0}
\renewcommand{\thetheorem}
{3.\arabic{theorem}}

\noindent The following criterion will be very useful for our proof of Theorem \ref{T1}. 

\begin{lem}\label{Chen2017}
Let $\FF$ be any field and  $W$ be an $n$-dimensional faithful representation of a finite group $G$
over $\FF$. Let $H\leqslant G$ be a proper subgroup with $[G:H]^{-1}\in \FF$. Suppose
$\{f_{1},f_{2},\dots,f_{m}\}\subset \FF[W]^{G}_{+}\subset \FF[W]^{H}$ is a set of homogeneous polynomials. 
Let $A=\FF[f_{1},f_{2},\dots,f_{m}]$ and $\J$ denote the ideal generated by $\{f_{1},f_{2},\dots,f_{m}\}$ in $\FF[W]^{H}$. We denote the Reynolds operator by:
\begin{equation}
\label{ }
\R=\frac{1}{[G:H]} {\rm Tr}_{H}^{G}:\FF[W]^{H}\longrightarrow \FF[W]^{G}.
\end{equation}
Suppose  that $\Delta\cup\{1\}$ is a homogenous generating set of $\FF[W]^{H}$ as an $A$-module and $\del\notin A$ for any $\del\in\Delta$, i.e., 
$$\FF[W]^{H}=A+\sum_{\del\in\Delta} \del \cdot A.$$
 If $\R(\del)\in\J$ for all $\del\in\Delta$, then
$\FF[W]^{G}=A$.
\end{lem}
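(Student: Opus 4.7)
The plan is to establish $\FF[W]^{G}\subseteq A$ (the reverse inclusion is trivial, since each $f_{i}$ is $G$-invariant). I will argue by induction on the degree of a homogeneous invariant $f\in \FF[W]^{G}$; the base case $\deg f=0$ is immediate because constants lie in $A$. The tool used throughout is the Reynolds operator $\R$: thanks to $[G:H]^{-1}\in\FF$, it is a degree-preserving $\FF[W]^{G}$-linear projection of $\FF[W]^{H}$ onto $\FF[W]^{G}$ that fixes $\FF[W]^{G}$ pointwise.

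For the inductive step, since $f\in \FF[W]^{G}\subseteq \FF[W]^{H}$, the hypothesis on $\Delta$ lets me write
$$f=a_{0}+\sum_{\del\in\Delta}\del\cdot a_{\del}$$
with $a_{0},a_{\del}\in A$ chosen so that each summand is homogeneous of degree $\deg f$ (both $\Delta$ and $A$ are graded, so one simply picks the appropriate graded pieces). Applying $\R$ and using that $a_{0},a_{\del}\in A\subseteq \FF[W]^{G}$ are fixed by $\R$ and pull out of the $\FF[W]^{G}$-linear operator, I obtain
$$f=\R(f)=a_{0}+\sum_{\del\in\Delta}\R(\del)\cdot a_{\del}.$$

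By hypothesis $\R(\del)\in\J$, so $\R(\del)=\sum_{i=1}^{m}f_{i}\,g_{i,\del}$ for some homogeneous $g_{i,\del}\in\FF[W]^{H}$. Substituting and regrouping yields $f-a_{0}=\sum_{i=1}^{m}f_{i}\cdot h_{i}$ with $h_{i}\in \FF[W]^{H}$ of degree strictly less than $\deg f$, using that every $f_{i}$ has positive degree. A second application of $\R$ preserves the $G$-invariant left side while pulling the $f_{i}\in \FF[W]^{G}$ out of the operator, so $f-a_{0}=\sum_{i=1}^{m}f_{i}\cdot \R(h_{i})$ with each $\R(h_{i})\in \FF[W]^{G}$ of strictly smaller degree than $f$. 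The induction hypothesis then forces $\R(h_{i})\in A$, and hence $f\in A$.

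The main obstacle is the degree bookkeeping that makes the induction actually decrease: one must check that the decomposition coming from $\FF[W]^{H}=A+\sum_{\del}\del\cdot A$ can be taken homogeneously, and that $\deg h_{i}<\deg f$, which rests on the assumption $f_{i}\in \FF[W]^{G}_{+}$ having positive degree. The conceptual heart of the argument is the double use of $\R$: the first application converts the $A$-module expansion of $f$ inside $\FF[W]^{H}$ into a $G$-invariant expansion whose coefficients $\R(\del)$ lie in $\J$, and the second transports the remainder terms $h_{i}\in \FF[W]^{H}$ back into $\FF[W]^{G}$ so that the inductive hypothesis on smaller degree applies.
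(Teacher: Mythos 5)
Your proof is correct and follows essentially the same route as the paper: both decompose the element via the homogeneous $A$-module generating set, apply the $\FF[W]^{G}$-linear Reynolds operator to replace each $\del$ by $\R(\del)\in\J$, and then apply $\R$ a second time so that the coefficients of the $f_{i}$ become $G$-invariants of strictly smaller degree, to which the induction applies. The only cosmetic difference is that the paper phrases the induction as ``$\R(g)\in A$ for all homogeneous $g\in\FF[W]^{H}$'' and invokes surjectivity of $\R$, whereas you induct directly on homogeneous elements of $\FF[W]^{G}$; these are equivalent.
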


\begin{proof}
Since $A\subseteq\FF[W]^{G}$, it suffices to show that $\FF[W]^{G}\subseteq A$. Note that $\R$ is surjective and degree-preserving. Thus
we only need to show the following \textit{claim}: $$\R(g)\in A,\textrm{ for any homogeneous element }g\in \FF[W]^{H}.$$
We use the induction on degree of $g$ to prove this claim. Suppose $g\in \FF[W]^{H}$ is any homogeneous element and suppose $\R(h)\in A$ for any homogeneous element $h\in \FF[W]^{H}$ with $\deg(h)<\deg(g)$.  Since
$\Delta\cup\{1\}$ is a homogeneous generating set of $\FF[W]^{H}$ as an $A$-module, we may write $$g=a_{0}+\sum_{i=1}^{r} a_{i}\cdot \del_{i}$$
where all $\del_{i}\in\Delta$, $a_{i}\in A$ and $r\in\N^{+}$. 
Since every $\R(\del_{i})\in\J$, we may write
$$\R(\del_{i})=g_{i1}f_{1}+g_{i2}f_{2}+\dots+g_{im}f_{m}$$
where $g_{ij}\in \FF[W]^{H}$ are homogeneous. Since $\R$ is an $\FF[W]^{G}$-module homomorphism, we have
$$\R(g)=\R^{2}(g)=\R\left(a_{0}+\sum_{i=1}^{r} a_{i}\cdot \R(\del_{i})\right)=a_{0}+\sum_{i=1}^{r} a_{i}\sum_{j=1}^{m}f_{j}\cdot \R(g_{ij}).$$
Note that $\deg(f_{j})>0$, so $\deg(g_{ij})<\deg(g)$
for all $1\leqslant i\leqslant r$ and $1\leqslant j\leqslant m$. Thus $\R(g_{ij})\in A$. Therefore, $\R(g)\in A$ and the claim holds.
\end{proof}

This lemma leads us to reduce the calculation of $\IR$ to computing $\Fq[mV]^{P}$, where $P$
denotes the Sylow 2-subgroup of $\OO$. On the other hand, 
we see that $P\cong C_2$ is the cyclic group of order 2. It is well-known that any 2-dimensional indecomposable modular representation of the cyclic group $C_p=\langle \sig\rangle$ of order $p$ is  equivalent to  the representation defined by 
  $$\sig\mapsto\begin{pmatrix}
    1  &  1  \\
     0 &  1
\end{pmatrix},$$
see for example,  \cite[page 105]{CW11}.
Since the invariant rings for equivalent representations are isomorphic,  we derive  the following result immediately from Richman's Theorem, see \cite{Ric90} or  \cite{CSW10}.
  
\begin{thm} \label{Ric1990} 
Let $\Fq$ be a finite field of characteristic 2 and $P=\langle \sig\rangle$ be the cyclic group of order 2. 
Suppose that $\Fq[mV]=\FqmV$ is a polynomial algebra on which $P$ acts by permutation, i.e.,  $\sig(x_i)=y_i$ and $\sig(y_i)=x_i$ for all $1\leqslant i\leqslant m$. Then  $\Fq[mV]^{P}$ is generated by 
\begin{eqnarray*}
\mathcal{L} & = & \Big\{L_i=x_i+y_i\mid 1\leqslant i\leqslant m\Big\} \\
\mathcal{N} & = & \Big\{N_i=x_iy_i\mid 1\leqslant i\leqslant m\Big\}\\
\mathcal{U} &=& \Big\{U_{ij}=x_iy_j+x_jy_i\mid 1\leqslant i<j\leqslant m\Big\}\\
\mathcal{B}'&=& \Big\{B_\upalpha=\xa+\ya\mid 0\leqslant\upalpha_{1},\dots,\upalpha_{m}\leqslant 1\Big\}.
\end{eqnarray*}
\end{thm}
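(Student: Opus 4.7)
The plan is to give a direct combinatorial proof of the statement; an alternative route is to reduce to Richman's theorem after the change of variables $z_{i}=x_{i}$, $L_{i}=x_{i}+y_{i}$, which converts the swap action into the standard upper-triangular action of $C_{2}$ on $mV_{2}$ (so that $\sig(z_{i})=z_{i}+L_{i}$) and lets Richman's generators be translated back.

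I would start by observing that a polynomial $f=\sum c_{\alpha,\bet}\,x^{\alpha}y^{\bet}\in \Fq[mV]$ is $P$-invariant if and only if $c_{\alpha,\bet}=c_{\bet,\alpha}$ for every pair of multi-indices. Regrouping, every $P$-invariant is then an $\Fq$-linear combination of \emph{diagonal monomials} $x^{\gam}y^{\gam}=N_{1}^{\gam_{1}}\cdots N_{m}^{\gam_{m}}$ and \emph{transfers} $T_{\alpha,\bet}:=x^{\alpha}y^{\bet}+x^{\bet}y^{\alpha}$ with $\alpha\neq \bet$. Diagonals already lie in $\Fq[\mathcal{N}]$, so the task reduces to realising each $T_{\alpha,\bet}$ inside the subalgebra $A:=\Fq[\mathcal{L}\cup\mathcal{N}\cup\mathcal{U}\cup\mathcal{B}']$.

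The reduction proceeds in two steps. First, I would peel off the common part: setting $\gam:=\alpha\wedge\bet$ (componentwise minimum), $\alpha':=\alpha-\gam$, $\bet':=\bet-\gam$ produces multi-indices $\alpha',\bet'$ with disjoint supports and $T_{\alpha,\bet}=N^{\gam}\,T_{\alpha',\bet'}$. For disjoint supports one has $x^{\alpha'}x^{\bet'}=x^{\alpha'+\bet'}$, yielding the key identity
\begin{equation*}
B_{\alpha'}B_{\bet'}=B_{\alpha'+\bet'}+T_{\alpha',\bet'},
\end{equation*}
so $T_{\alpha',\bet'}$ is expressible via the $B_{\alpha}$'s. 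Second, I would replace every $B_{\alpha}$ with some $\alpha_{i}\geqslant 2$ using the recursion
\begin{equation*}
B_{\alpha+2e_{i}}=L_{i}B_{\alpha+e_{i}}+N_{i}B_{\alpha},
\end{equation*}
which is immediate from $L_{i}B_{\alpha+e_{i}}=B_{\alpha+2e_{i}}+N_{i}B_{\alpha}$ in characteristic $2$. A straightforward induction on $\sum_{i}\alpha_{i}$ brings every exponent vector down to the squarefree range $\{0,1\}^{m}$. Combining the two steps shows $\Fq[mV]^{P}\subseteq A$, and the reverse inclusion is clear.

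The only genuine obstacle is spotting the disjoint-support identity in the first step; everything else is elementary bookkeeping. An amusing by-product is $U_{ij}=L_{i}L_{j}+B_{e_{i}+e_{j}}$, which shows that $\mathcal{U}$ is technically redundant in the generating set, but it is retained because it arises naturally as the $P$-transfer of $x_{i}y_{j}$ and will play a prominent role in the arguments that follow.
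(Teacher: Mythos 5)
Your proof is correct, but it takes a genuinely different route from the paper. The paper does not argue directly at all: it observes that in characteristic $2$ the swap representation is equivalent to the standard indecomposable two\-/dimensional representation of $C_2$ (via the change of basis you mention in your opening sentence) and then quotes Richman's theorem, as proved by Campbell--Hughes and made explicit by Campbell--Shank--Wehlau. You instead give a self-contained elementary argument that exploits the special feature of $p=2$: the group permutes the variables, so the invariants are spanned by the diagonal monomials $N^{\gamma}$ and the orbit sums $T_{\alpha,\beta}=x^{\alpha}y^{\beta}+x^{\beta}y^{\alpha}$, and everything reduces to the two identities $B_{\alpha'}B_{\beta'}=B_{\alpha'+\beta'}+T_{\alpha',\beta'}$ (after factoring out $N^{\alpha\wedge\beta}$; the degenerate case $\alpha'=0$ just gives $T_{0,\beta'}=B_{\beta'}$ directly) and $B_{\alpha+2e_i}=L_iB_{\alpha+e_i}+N_iB_{\alpha}$, both of which I have checked. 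What your approach buys is independence from the (nontrivial) Richman--Campbell--Hughes machinery, which is genuinely needed only for odd $p$, plus the observation that $\mathcal{U}$ is redundant given $\mathcal{L}$ and $\mathcal{B}'$ via $U_{ij}=L_iL_j+B_{e_i+e_j}$; this is consistent with the paper's Remark 3.3, where the minimal generating set keeps $U_{ij}$ and instead discards $B_{e_i+e_j}$ (the two are interchangeable modulo $L_iL_j$). What the citation buys the paper is brevity and direct access to the Shank--Wehlau minimality statement, which the paper reuses later when proving minimality of its own generating set.
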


  \begin{rem}\label{r3.3}
  {\rm
  Note that in Theorem \ref{Ric1990}, the set $\mathcal{L}$ is contained in $\mathcal{B}'$. Moreover, $\Fq[mV]^{P}$ is generated minimally by $\mathcal{L}\cup \mathcal{N} \cup\mathcal{U} \cup \mathcal{B}''$, where $$\mathcal{B}'':=\Big\{B_{\upalpha}\in \mathcal{B}'~\big|~ |\upalpha|\geqslant 3\Big\}$$
see \cite[Corollary 4.4]{SW02}.
\hbo}\end{rem}

\section{Proof of  Theorem \ref{T1}}\label{sec4}
\setcounter{equation}{0}
\renewcommand{\theequation}
{4.\arabic{equation}}
\setcounter{theorem}{0}
\renewcommand{\thetheorem}
{4.\arabic{theorem}}

\noindent We begin this section with the following well-known result whose proof could be found in  \cite[Lemma 9.0.2]{CW11}.

\begin{lem}\label{4.1} 
Let $q=p^{s}$ be a prime power and $e\in\N^{+}$. Then
$$\sum_{a\in \mathbb{F}_{q}^{\times}}a^{e}=\begin{cases}
     -1, & \text{if } q-1\text{ divides } e, \\
      0, & \text{otherwise}.
\end{cases}$$
\end{lem}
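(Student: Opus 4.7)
The plan is to exploit the fact that $\mathbb{F}_q^{\times}$ is a cyclic group of order $q-1$. Fix a generator $g$ of $\mathbb{F}_q^{\times}$; then every element of $\mathbb{F}_q^{\times}$ is uniquely of the form $g^i$ for $0 \leqslant i \leqslant q-2$, so the sum rewrites as the geometric series
\begin{equation*}
S_e := \sum_{a\in \mathbb{F}_q^{\times}} a^e = \sum_{i=0}^{q-2} (g^e)^i.
\end{equation*}

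First I would handle the case $q-1 \mid e$. Here $g^e = (g^{q-1})^{e/(q-1)} = 1$, so every summand equals $1$ and $S_e = q-1 = -1$ in $\mathbb{F}_q$ (using $\textrm{char}(\mathbb{F}_q) = p$ and $p \mid q-1$ is \emph{not} needed; we only need that $q-1 \equiv -1$ in $\mathbb{F}_q$, which holds since $q \equiv 0$).

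Next, assume $q-1 \nmid e$. Then $g^e \neq 1$ in $\mathbb{F}_q^{\times}$, so the geometric-series formula gives
\begin{equation*}
S_e = \frac{(g^e)^{q-1} - 1}{g^e - 1}.
\end{equation*}
The numerator vanishes because $g^{q-1} = 1$, hence $S_e = 0$. This is the only step where one has to be slightly careful, namely to check that the denominator is nonzero, which is precisely guaranteed by the assumption $q-1 \nmid e$.

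There is no real obstacle: the lemma is entirely elementary, a consequence of the cyclic structure of $\mathbb{F}_q^{\times}$ and a geometric-series identity. I would simply present the two cases as above.
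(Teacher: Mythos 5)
Your proof is correct. The paper does not prove this lemma itself but simply cites Campbell and Wehlau \cite[Lemma 9.0.2]{CW2011}, and the argument given there is exactly the one you present: write $\mathbb{F}_q^{\times}=\langle g\rangle$, reduce to the geometric series $\sum_{i=0}^{q-2}(g^e)^i$, and split into the cases $g^e=1$ (giving $q-1=-1$ in $\mathbb{F}_q$) and $g^e\neq 1$ (where the series telescopes to $0$). Nothing is missing.
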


We define $L^{e}:= L_{1}^{e_{1}}L_{2}^{e_{2}}\cdots L_{m}^{e_{m}}$ and $N^{\del}:=N_{1}^{\del_{1}}N_{2}^{\del_{2}}\cdots N_{m}^{\del_{m}}$ for any vectors $e=(e_{1},e_{2},\dots,e_{m})\in \N^{m}$ and $\del=(\del_{1},\del_{2},\dots,\del_{m})\in \N^{m}$. The following result is an immediate consequence from  \cite[Proposition 3.4]{CW14}.

\begin{lem}\label{CW2014} 
For any $B_{\upalpha},B_{\bet}\in \mathcal{B}'$,  we have
\begin{equation}
\label{ }
B_{\upalpha}\cdot B_{\bet}
= \sum L^{e}\cdot N^{\del} \cdot B_{\gam}+N^{\del'}\cdot\sum L^{e'} \cdot B_{\gam'}
\end{equation}
where two sums are both finite, the vectors $e,e',\del,\del'\in \N^{m}$, and  $B_{\gam},B_{\gam'}\in \mathcal{B}'$.
\end{lem}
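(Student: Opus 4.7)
The plan is to apply \cite[Proposition~3.4]{CW2014}, which contains exactly this decomposition for products of orbit sums under the $C_{2}$-swap of paired variables; since the action of $\sig$ on $\Fq[mV]$ is precisely this swap, that proposition transfers verbatim to our setting.

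A self-contained argument would start from the expansion
$$B_{\alp}\cdot B_{\bet}=(x^{\alp+\bet}+y^{\alp+\bet})+(x^{\alp}y^{\bet}+y^{\alp}x^{\bet})$$
and treat the two summands separately. Write $S=\mathrm{supp}(\alp)\cap\mathrm{supp}(\bet)$ and $T=\mathrm{supp}(\alp)\triangle\mathrm{supp}(\bet)$. On the diagonal part $x^{\alp+\bet}+y^{\alp+\bet}$, the exponent equals $2$ at each index in $S$ and $1$ at each index in $T$; using the characteristic-$2$ identities $x_{i}^{2}=L_{i}x_{i}+N_{i}$ and $y_{i}^{2}=L_{i}y_{i}+N_{i}$ and expanding over subsets $S'\subseteq S$ gives
$$x^{\alp+\bet}+y^{\alp+\bet}=\sum_{S'\subseteq S}L^{1_{S'}}\cdot N^{1_{S\setminus S'}}\cdot B_{1_{S'\cup T}},$$
which is already in the shape $\sum L^{e}\cdot N^{\del}\cdot B_{\gam}$ demanded by the first summand. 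For the cross part, the overlap at $S$ factors out as $N_{S}=\prod_{i\in S}N_{i}$; the remaining factor $x_{A'}y_{B'}+y_{A'}x_{B'}$, where $A'=\mathrm{supp}(\alp)\setminus S$ and $B'=\mathrm{supp}(\bet)\setminus S$ are disjoint, is then rewritten as a sum $\sum L^{e'}\cdot B_{\gam'}$ by iterating the peeling identity $x_{j}y_{C}+y_{j}x_{C}=L_{j}B_{1_{C}}+B_{1_{\{j\}\cup C}}$ (obtained from expanding $(x_{j}+y_{j})(x_{C}+y_{C})$), producing the second summand with $N^{\del'}=N_{S}$.

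The principal obstacle is completing the cross-part expansion when both $|A'|\geqslant 2$ and $|B'|\geqslant 2$: a naive iteration of the peeling identity entangles the roles of $A'$ and $B'$ and tends to introduce products of two $B$'s rather than single-$B$ terms. Resolving this requires a careful induction on $|A'|+|B'|$ together with a case split that keeps one index at a time isolated on the $L$-side; this is the technical heart of \cite[Proposition~3.4]{CW2014} and is the reason the lemma is quoted from that source here rather than proved from scratch.
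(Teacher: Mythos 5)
Your proposal takes essentially the same route as the paper, which offers no argument beyond declaring the lemma an immediate consequence of \cite[Proposition 3.4]{CW2014}; your primary plan is that same citation, so the two agree. As a side remark, the ``principal obstacle'' you flag in the cross part is not actually an obstacle: for disjoint $A'$ and $B'$ one has the closed-form identity $x_{A'}y_{B'}+y_{A'}x_{B'}=\sum_{K\subseteq A'}L^{1_{K}}B_{1_{(A'\setminus K)\cup B'}}$ (verified by expanding each $L^{1_K}$ and noting that all coefficients $2^{|P|}$ with $P\neq\emptyset$ vanish in characteristic $2$), which together with your treatment of the diagonal part yields a complete self-contained proof.
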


 \begin{proof}[Proof of the first assertion of Theorem \ref{T1}]
We define $\mathcal{S} :=  \mathcal{N}\cup \mathcal{B}\cup \mathcal{D}$, which will be our desired generating set as $\{f_{1},f_{2},\dots,f_{m}\}$ in Lemma \ref{Chen2017}  and let $\J$ denote the ideal generated by $\mathcal{S}$ in $\Fq[mV]^{P}$. Then the Reynolds operator 
  \begin{equation}
\label{ }
\R:=\R_{P}^{\OO}: \Fq[mV]^{P}\longrightarrow \Fq[mV]^{\OO},\quad f\mapsto \frac{1}{[\OO:P]}\sum_{a\in \Fq^{\times}}\uptau_{a}\cdot f= \sum_{a\in \Fq^{\times}}\uptau_{a}\cdot f
\end{equation}
 is a surjective homomorphism of $\Fq[mV]^{\OO}$-modules.
 
By Lemma \ref{Chen2017} and Theorem \ref{Ric1990}, it suffices to show 
 that the image of any non-constant polynomials in $\Fq[mV]^{P}$ with following form
 \begin{equation}
\label{ }
\left(\prod_{i=1}^{m} N_{i}^{\upalpha_{i}}\right)\left(\prod_{1\leqslant i<j\leqslant m} U_{ij}^{\bet_{ij}}\right)\left(\prod_{B_{\gam}\in \mathcal{B}'}B_{\gam}^{e_{\gam}}\right)
\end{equation}
under $\R$ belongs to $\J$, where $\upalpha_{i},\bet_{ij},e_{\gam}\in \N$.  Note that any  $\Fq[\mathcal{S}]$-module generating set $\Delta$ of $\Fq[mV]^{P}$  consists of elements of the above forms, which means that here we actually give a 
proof for a general result so that the conditions in Lemma \ref{Chen2017} are satisfied. 

Since all $N_{i}, U_{ij}\in \J$ and $\R$ preserves all $\OO$-invariants,  it is sufficient to prove that
\begin{equation}
\label{ }
\R\left(\prod_{B_{\gam}\in \mathcal{B}'}B_{\gam}^{e_{\gam}}\right)\in \J,
\end{equation}
where $\deg\left(\prod\limits_{B_{\gam}\in \mathcal{B}'}B_{\gam}^{e_{\gam}}\right)>0.$ 
By Lemma \ref{CW2014}, it suffices to prove the following three cases:
\begin{eqnarray}
\R(B_{\upalpha})&\in&\J, \\
\R(L^{\upalpha})&\in&\J,\\
\R(L^{\upalpha}\cdot B_{\bet})&\in&\J,
\end{eqnarray}
where $B_{\upalpha},B_{\bet}\in \mathcal{B}'$ and $L^{\upalpha}=L_{1}^{\upalpha_{1}}L_{2}^{\upalpha_{2}}\cdots L_{m}^{\upalpha_{m}}$
are polynomials with positive degree. Our proof  will be completed by applying 
the following Lemmas \ref{5.4},  \ref{5.5} and  \ref{5.6} respectively.
 \end{proof}

 \begin{proof}[Proof of the second assertion of Theorem \ref{T1}]
It is sufficient to show that every element in $\mathcal{N} \cup \mathcal{B} \cup \mathcal{D}$ is indecomposable. 
The fact that $\IR\subseteq\Fq[mV]^{P}$, together with that all $N_{i}$ and $U_{ij}$ are indecomposable in $\Fq[mV]^{P}$ (Remark \ref{r3.3}) implies that all $N_{i}$ and $U_{ij}$ are indecomposable in $\IR$. Note that the elements in $\mathcal{D}$ can be separated into two classes: $$\mathcal{D}_{1}=\{d_{I,J}:|J|=|I|\}\textrm{ and }\mathcal{D}_{2}=\{d_{I,J}:|J|=|I|+q-1\}.$$

For any $B_{\upalpha}\in \mathcal{B}$, assume by way of contradiction  that $B_{\upalpha}$ is decomposable.  Since $|B_{\upalpha}|=q-1$ and every element in $\mathcal{D}_{2}$ has degree $>q-1$, it does not factor using elements from $\mathcal{D}_{2}$. Note that all elements in $\mathcal{N}\cup \mathcal{D}_{1}$ have even degree, so any product of them has even degree. 
However, $|B_{\upalpha}|=q-1$ is odd, 
thus $B_{\upalpha}$  does not factor using  elements from $\mathcal{N}\cup \mathcal{D}_{1}$.
Thus $B_{\upalpha}$ factor using only elements from $\mathcal{B}-\{B_{\upalpha}\}$.
Since any element in $\mathcal{B}$ has the same degree, $B_{\upalpha}$ is a linear combination among $\mathcal{B}-\{B_{\upalpha}\}$ over $\Fq$. This contradiction shows that $B_{\upalpha}$ is indecomposable.

By Shank and Wehlau \cite[Corollary 4.4]{SW02}, we have seen that 
$\prod_{i\in I}x_{i}+\prod_{i\in I}y_{i}$ is indecomposable in $\Fq[mV]^{P}$ for any $I\subseteq \overline{m}$
with $|I|>2$. Thus choosing a suitable basis for $mV$, we also deduce that 
$$d_{I,J}=x_{I}y_{J}+y_{I}x_{J}=\prod_{i\in I}x_{i}\cdot \prod_{j\in J}y_{j}+\prod_{i\in I}y_{i}\cdot \prod_{j\in J}x_{j}$$
is indecomposable in $\Fq[mV]^{P}$ for any $\emptyset\neq I<J\subseteq \overline{m}$
with $|I|+|J|>2$. Thus for any $d_{I,J}\in \mathcal{D}$ with $|I|+|J|>2$, it is indecomposable in $\IR$. 
Since any element in $\mathcal{D}$ has degree $\geqslant 2$, we need only to show that the 
elements in $\mathcal{D}$ with degree 2 are indecomposable. In fact this set of elements of degree 2 just coincides with $\mathcal{U}$.
We have seen that every $U_{ij}$ is indecomposable. This completes the proof.
 \end{proof}

\section{Lemmas}\label{sec5}
\setcounter{equation}{0}
\renewcommand{\theequation}
{5.\arabic{equation}}
\setcounter{theorem}{0}
\renewcommand{\thetheorem}
{5.\arabic{theorem}}

\noindent We follow the notations in previous section and begin with a simple but useful observation:

   \begin{lem}\label{5.1}
$\R(\J)\subseteq \J.$
  \end{lem}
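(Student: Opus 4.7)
The plan is to reduce the claim to the standard fact that the transfer $\R$ is a homomorphism of $\IR$-modules, and then to observe that every generator of $\J$ already lies in $\IR$, so the generators can simply be pulled outside the operator.

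First I would verify that $\mathcal{S}=\mathcal{N}\cup\mathcal{B}\cup\mathcal{D}\subseteq\IR$. For $N_{i}=x_{i}y_{i}$ one has $\sig(N_{i})=N_{i}$ and $\tau_{a}(N_{i})=(a^{-1}x_{i})(a y_{i})=N_{i}$. For $B_{\alpha}$ with $|\alpha|=q-1$ this is exactly the proposition proved at the end of Section 2. For $d_{I,J}=x_{I}y_{J}+y_{I}x_{J}$ with $|J|-|I|\in\{0,q-1\}$, note that $\sig$ interchanges $x$ with $y$ and hence interchanges the two monomials, while
\[
\tau_{a}(d_{I,J}) = a^{|J|-|I|}\,x_{I}y_{J} + a^{|I|-|J|}\,y_{I}x_{J},
\]
which equals $d_{I,J}$ precisely because $(q-1)\mid(|J|-|I|)$ by the defining condition of $\mathcal{D}$.

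Next, for any $f\in\J$ I would write $f=\sum_{i} g_{i}\,s_{i}$ with $g_{i}\in\Fq[mV]^{P}$ and $s_{i}\in\mathcal{S}$. Since each $s_{i}$ is an $\OO$-invariant, the operator $\R$ is linear over the subring generated by the $s_{i}$, so the generators may be pulled outside:
\[
\R(f) = \sum_{i} s_{i}\,\R(g_{i}).
\]
Each $\R(g_{i})$ lies in $\IR\subseteq\Fq[mV]^{P}$, so $s_{i}\,\R(g_{i})\in\J$, giving $\R(f)\in\J$ as required.

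There is no real obstacle here; the lemma is purely formal, following from $\R$ being $\IR$-linear together with the containment $\mathcal{S}\subseteq\IR$. The only mildly calculational point is the $\tau_{a}$-invariance of $d_{I,J}$, but this is precisely what motivates the degree condition $|J|-|I|\in\{0,q-1\}$ built into the definition of $\mathcal{D}$.
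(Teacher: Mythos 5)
Your proof is correct and follows essentially the same route as the paper: write $f=\sum_i s_i g_i$ with $s_i\in\mathcal{S}$ and $g_i\in\Fq[mV]^{P}$, and use that $\R$ is an $\IR$-module homomorphism to pull the generators outside, so $\R(f)=\sum_i s_i\R(g_i)\in\J$. The extra verification that $\mathcal{S}\subseteq\IR$ is a harmless (and correct) addition that the paper leaves implicit.
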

  
\begin{proof}
For any $f\in \J$, we may write $f=\sum a_{i}\cdot f_{i}$ with $a_{i}\in \mathcal{S}$ and $f_{i}\in  \Fq[mV]^{P}$. Since $\R$ is an $\IR$-module homomorphism, we have
 $\R(f)=\R(\sum a_{i}\cdot f_{i})=\sum a_{i}\cdot \R(f_{i})\in \J$. Thus $\R(\J)\subseteq \J.$
\end{proof}

  \begin{lem}\label{5.2}
  Let $\upalpha\in\N^{m}$ be any vector with $|\upalpha|>0$ and $B_{\upalpha}=\xa+\ya\in \Fq[mV]^{P}$. Then for any $e\in \mathbb{N}^{+}$, we have $B_{\upalpha}^{e}\equiv (\xa)^{e}+(\ya)^{e}$ $(\mo\J)$.
In particular, $L_{i}^{e}\equiv x_{i}^{e}+y_{i}^{e}$ $(\mo\J)$,  for all $i\in \overline{m}$.
  \end{lem}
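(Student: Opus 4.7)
The plan is to induct on $e \geq 1$. The base case $e = 1$ is tautological since both sides reduce to $B_\alpha$. For the inductive step, assuming the congruence for $e$, I would multiply by $B_\alpha$ and expand:
\begin{align*}
B_\alpha^{e+1} &= B_\alpha \cdot B_\alpha^e \equiv (\xa + \ya)\bigl((\xa)^e + (\ya)^e\bigr) \pmod{\J} \\
&= (\xa)^{e+1} + (\ya)^{e+1} + \xa \cdot \ya \cdot \bigl((\xa)^{e-1} + (\ya)^{e-1}\bigr),
\end{align*}
where the two mixed monomials $\xa(\ya)^e$ and $(\xa)^e\ya$ have been combined into a single factored form.

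It would then remain to show the residual term $\xa \cdot \ya \cdot ((\xa)^{e-1} + (\ya)^{e-1})$ lies in $\J$. The crucial observation is
$$\xa \cdot \ya = \prod_{i=1}^{m}(x_i y_i)^{\alpha_i} = \prod_{i=1}^{m} N_i^{\alpha_i}.$$
Since $|\alpha| > 0$, at least one $\alpha_i \geq 1$, and so some $N_i\in\mathcal{N}\subseteq \mathcal{S}$ divides $\xa \cdot \ya$. The remaining factor $(\xa)^{e-1} + (\ya)^{e-1}$ is precisely the $P$-invariant $B_{(e-1)\alpha}$, so the whole residual product sits in $N_i \cdot \Fq[mV]^P \subseteq \J$, closing the induction. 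The specialization $L_i^e \equiv x_i^e + y_i^e \pmod{\J}$ would then follow immediately by taking $\alpha$ to be the $i$-th standard basis vector of $\N^m$, for which $|\alpha| = 1 > 0$.

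I do not anticipate a serious obstacle. The argument reduces to the ring-theoretic identity $(u+v)(u^e + v^e) = u^{e+1} + v^{e+1} + uv(u^{e-1} + v^{e-1})$ plus the recognition of $N_i$ as a divisor of the \emph{paired} monomial $\xa \cdot \ya$. The one sanity check worth noting is the boundary $e = 1$ of the induction, where the residual $\xa \cdot \ya \cdot (1 + 1)$ vanishes in characteristic $2$, matching the direct Frobenius computation $B_\alpha^2 = (\xa)^2 + (\ya)^2$ and confirming internal consistency of the recursion.
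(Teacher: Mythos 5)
Your proof is correct, and it reaches the conclusion by a genuinely different route from the paper. The paper expands $B_{\alpha}^{e}$ directly by the binomial theorem and then analyzes the cross terms ${e\choose k}(\xa)^{e-k}(\ya)^{k}$: it pairs the $k$-th and $(e-k)$-th terms, discards the pairs whose binomial coefficient is even (treating the middle term ${e\choose e/2}$ separately when $e$ is even), and shows each surviving pair is $N_{1}$ times a $P$-invariant after assuming without loss of generality that $\alpha_{1}>0$. Your induction on $e$ via the identity $(u+v)(u^{e}+v^{e})=u^{e+1}+v^{e+1}+uv(u^{e-1}+v^{e-1})$ arrives at the same crucial observation --- that the symmetric combination of mixed monomials equals $\prod_{i}N_{i}^{\alpha_{i}}$ times a $P$-invariant, hence lies in $\J$ --- but sidesteps all the parity bookkeeping for binomial coefficients, which is where most of the work in the paper's proof lives; in that sense your argument is cleaner. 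Two points are worth making explicit in a final write-up: multiplying the inductive congruence by $B_{\alpha}$ is legitimate because $B_{\alpha}\in\Fq[mV]^{P}$ and $\J$ is an ideal of $\Fq[mV]^{P}$; and after extracting one factor $N_{i}$ from $\xa\cdot\ya=\prod_{j}N_{j}^{\alpha_{j}}$, the remaining cofactor $\bigl(\prod_{j}N_{j}^{\alpha_{j}}/N_{i}\bigr)\cdot\bigl((\xa)^{e-1}+(\ya)^{e-1}\bigr)$ is itself $P$-invariant, so the residual genuinely lies in $N_{i}\cdot\Fq[mV]^{P}\subseteq\J$. With those remarks in place the induction closes, and your observation that the $e=1$ step of the recursion degenerates to the Frobenius identity in characteristic $2$ is a sound consistency check.
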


  \begin{proof}
Since $|\upalpha|>0$, there exists some $i\in \overline{m}$
  such that $\upalpha_{i}>0$. Without loss of generality, we suppose $\upalpha_{1}> 0$. Define $x^{\upalpha}=\xa$ and $y^{\upalpha}=\ya$.
  By the binomial formula, we have
  $$B_{\upalpha}^{e}=(x^{\upalpha}+y^{\upalpha})^{e} = (x^{\upalpha})^{e}+\bigg[\sum_{k=1}^{e-1} {e\choose e-k}(x^{\upalpha})^{e-k}(y^{\upalpha})^{k}\bigg]+(y^{\upalpha})^{e}.$$
We \textit{claim} that  $\sum_{k=1}^{e-1} {e\choose e-k}(x^{\upalpha})^{e-k}(y^{\upalpha})^{k}\in \J$.
Define $b_{k}:={e\choose e-k}(x^{\upalpha})^{e-k}(y^{\upalpha})^{k}+{e\choose k}(x^{\upalpha})^{k}(y^{\upalpha})^{e-k}$ for $1\leqslant k\leqslant\frac{e-1}{2}$ (when $e$ is odd) and $1\leqslant k\leqslant\frac{e}{2}-1$ (when $e$ is even). 
Further, when $e$ is even, since ${e\choose e/2}$ is even, we have
${e\choose e/2}(x^{\upalpha})^{e/2}(y^{\upalpha})^{e/2}=0$.
Thus it is sufficient to show that every $b_{k}\in\J$.
Since char$(\Fq)=2$ and ${e\choose e-k}={e\choose k}$,  $b_{k}=0$ whenever ${e\choose k}$ is even.
Suppose ${e\choose k}$ is odd in $b_{k}$. Since char$(\Fq)=2$, then ${e\choose k}=1$ in $\Fq$.
Note that $\upalpha_{1}\geqslant 1$ and $\frac{e-1}{2}\geqslant k\geqslant 1$,  we have
  \begin{eqnarray*}
b_{k}&=&(x^{\upalpha})^{e-k}(y^{\upalpha})^{k}+(x^{\upalpha})^{k}(y^{\upalpha})^{e-k} \\
 & = & (x_1^{\upalpha_1(e-k)}x_2^{\upalpha_2(e-k)}\cdots x_m^{\upalpha_m(e-k)}) (y_1^{\upalpha_1k}y_2^{\upalpha_2k}\cdots y_m^{\upalpha_mk}) +\\
 && (y_1^{\upalpha_1(e-k)}y_2^{\upalpha_2(e-k)}\cdots y_m^{\upalpha_m(e-k)}) (x_1^{\upalpha_1k}x_2^{\upalpha_2k}\cdots x_m^{\upalpha_mk}) \\
 &=& N_{1}\cdot\bigg[(x_1^{\upalpha_1(e-k)-1}x_2^{\upalpha_2(e-k)}\cdots x_m^{\upalpha_m(e-k)}) (y_1^{\upalpha_1k-1}y_2^{\upalpha_2k}\cdots y_m^{\upalpha_mk}) +\\
 && (y_1^{\upalpha_1(e-k)-1}y_2^{\upalpha_2(e-k)}\cdots y_m^{\upalpha_m(e-k)}) (x_1^{\upalpha_1k-1}x_2^{\upalpha_2k}\cdots x_m^{\upalpha_mk})\bigg] \in \J. 
\end{eqnarray*}
Thus the claim follows and $B_{\upalpha}^{e}\equiv(x^{\upalpha})^{e}+(y^{\upalpha})^{e}$ $(\mo\J)$.
In particular, when $B_{\upalpha}=L_{i}=x_{i}+y_{i}$, we have $L_{i}^{e}\equiv x_{i}^{e}+y_{i}^{e}$ $(\mo\J)$.
  \end{proof}

 \begin{lem}\label{5.3}
For any nonempty sets $I,J\subseteq \overline{m}$ and  $d_{I,J}(\upalpha,\bet)=x^{\upalpha}_{I}\cdot y^{\bet}_{J}+y^{\upalpha}_{I}\cdot x^{\bet}_{J}$ with all exponents $\upalpha_{i},\bet_{j}\geqslant 1$, we have $\R(d_{I,J}(\upalpha,\bet))\in \J.$
  \end{lem}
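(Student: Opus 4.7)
The plan is first to compute $\R(d_{I,J}(\alpha,\beta))$ explicitly. Since $\tau_a(x_i)=a^{-1}x_i$ and $\tau_a(y_i)=ay_i$, one has $\tau_a(x_I^\alpha y_J^\beta)=a^{|\beta|-|\alpha|}\,x_I^\alpha y_J^\beta$ and $\tau_a(y_I^\alpha x_J^\beta)=a^{|\alpha|-|\beta|}\,y_I^\alpha x_J^\beta$. Summing over $a\in\Fq^\times$ and applying Lemma \ref{4.1} gives
\[
\R(d_{I,J}(\alpha,\beta))=\begin{cases} 0,&\text{if }q-1\nmid |\alpha|-|\beta|,\\ d_{I,J}(\alpha,\beta),&\text{if }q-1\mid |\alpha|-|\beta|,\end{cases}
\]
since the coefficient $-1$ produced by Lemma \ref{4.1} is simply $1$ in characteristic $2$. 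The vanishing case is trivially in $\mathcal{J}$, so the lemma reduces to proving $d_{I,J}(\alpha,\beta)\in\mathcal{J}$ whenever $q-1\mid|\alpha|-|\beta|$.

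The case $I\cap J\neq\emptyset$ is straightforward. Pick any $k\in I\cap J$; then $\alpha_k,\beta_k\geqslant 1$, and the factorization
\[
d_{I,J}(\alpha,\beta)=N_k\cdot\bigl(x_k^{\alpha_k-1}y_k^{\beta_k-1}x_{I\setminus\{k\}}^{\alpha}y_{J\setminus\{k\}}^{\beta}+y_k^{\alpha_k-1}x_k^{\beta_k-1}y_{I\setminus\{k\}}^{\alpha}x_{J\setminus\{k\}}^{\beta}\bigr)
\]
exhibits $d_{I,J}(\alpha,\beta)$ as $N_k$ times a bracket that is manifestly $\sigma$-invariant (symmetric under $x\leftrightarrow y$). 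Since $N_k\in\mathcal{N}\subseteq\mathcal{S}$, the product lies in $\mathcal{J}$.

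The main obstacle is the disjoint case $I\cap J=\emptyset$. I would proceed by induction on $|\alpha|+|\beta|$, using the identity
\[
B_\alpha\cdot B_\beta=B_{\alpha+\beta}+d_{I,J}(\alpha,\beta),
\]
which holds because $\alpha$ and $\beta$ have disjoint supports. Combined with Lemma \ref{5.2}, this reduces the problem to two ancillary claims: \textbf{(i)} $B_\gamma\in\mathcal{J}$ whenever $q-1\mid|\gamma|$; and \textbf{(ii)} the all-ones exponent case $d_{I,J}$ lies in $\mathcal{J}$ for arbitrary disjoint $I,J$ with $|J|-|I|\equiv 0\pmod{q-1}$, including interleaved configurations where $I\not<J$ and $J\not<I$ (so $d_{I,J}$ is not a priori in $\mathcal{D}$). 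For \textbf{(i)} I would induct on $|\gamma|/(q-1)$, splitting $\gamma=\gamma_1+\gamma_2$ with $|\gamma_1|=q-1$ chosen so that either $\gamma_1,\gamma_2$ share a nonzero coordinate (triggering the Case-1 $N$-factoring on the residual $d$), or, for a $0/1$-valued $\gamma$, $\gamma_1$ is the indicator of the smallest $q-1$ coordinates of the support, placing the residual $d_{S_1,S_2}$ directly in $\mathcal{D}$. For \textbf{(ii)} the characteristic-$2$ identity
\[
U_{i_1j_1}\,U_{i_2j_2}=d_{\{i_1,i_2\},\{j_1,j_2\}}+d_{\{i_1,j_2\},\{i_2,j_1\}}
\]
and its multi-factor analogs allow bipartite-matching exchanges that convert an interleaved pair $(I,J)$ into an ordered pair $(I',J')$ with $I'<J'$ modulo products of $U$-invariants, at which point $d_{I',J'}\in\mathcal{D}$. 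The subtlest technical step is orchestrating these exchanges so that every residual $d$-term either lies in $\mathcal{D}$ directly or falls within the inductive hypothesis, while the divisibility $|J|-|I|\equiv 0\pmod{q-1}$ is preserved throughout the reduction.
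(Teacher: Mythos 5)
Your overall architecture matches the paper's: reduce via Lemma \ref{4.1} to showing $d_{I,J}(\alpha,\beta)\in\J$ when $q-1$ divides $|\alpha|-|\beta|$, dispatch the case $I\cap J\neq\emptyset$ by extracting $N_k$, handle the all-ones disjoint case by $U$-exchanges to sort $(I,J)$ into $I<J$ and then peel off degree-$(q-1)$ elements of $\mathcal{B}$ by induction, and prove your claim (i) (which is essentially the claim inside Lemma \ref{5.4}) by a similar peeling. All of that is sound and is in substance what the paper does.

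The genuine gap is the disjoint case with some exponent $\geqslant 2$. Your proposed reduction via $B_\alpha B_\beta=B_{\alpha+\beta}+d_{I,J}(\alpha,\beta)$ does not work there: neither $B_\alpha B_\beta$ nor $B_{\alpha+\beta}$ need lie in $\J$, since $q-1\mid|\alpha|-|\beta|$ does not force $q-1\mid|\alpha|+|\beta|$ (for instance $\alpha=(2,0,0)$, $\beta=(0,1,1)$ with $q=4$ gives $|\alpha|+|\beta|=4\not\equiv 0 \pmod 3$), and applying $\R$ to the identity merely returns $\R(B_\alpha B_\beta)=\R(B_{\alpha+\beta})+\R(d_{I,J}(\alpha,\beta))$, i.e.\ the statement you are trying to prove. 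Concretely, for $d=x_1^2y_2y_3+y_1^2x_2x_3$ your scheme expresses $d$ as a sum of two terms neither of which is in $\J$, and your claims (i) and (ii) never engage because no exponent vector in sight has degree divisible by $q-1$ and the exponents are not all ones. The missing idea is the paper's Subcase 2: when $I\cap J=\emptyset$ and some $\alpha_i\geqslant 2$, pick any $j\in J$ and substitute $x_iy_j=U_{ij}+x_jy_i$; the leftover term then contains $x_iy_i=N_i$ precisely because $\alpha_i-1\geqslant 1$, giving
\begin{equation*}
d_{I,J}(\alpha,\beta)=U_{ij}\cdot\big[\cdots\big]+N_i\cdot\big[\cdots\big]\in\J
\end{equation*}
with both brackets $P$-invariant, so that $d_{I,J}(\alpha,\beta)\in\J$ unconditionally in this case. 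You need this (or an equivalent mechanism) to bridge from arbitrary exponents down to the all-ones case; with it inserted, the rest of your outline goes through.
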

 
 \begin{proof} 
Note that $d_{I,J}(\upalpha,\bet)$ is a $P$-invariant. 
The proof will be separated into two cases: $I\cap J\neq \emptyset$ and $I\cap J=\emptyset$. 
For the first case, we suppose that there exists an integer $k\in I\cap J$. Since all  $\upalpha_{i},\bet_{j}\geqslant 1$, we have
  \begin{eqnarray*}
d_{I,J}(\upalpha,\bet)&=&(x_{k}^{\upalpha_{k}}y_{k}^{\bet_{k}})\cdot x^{\upalpha}_{I-\{k\}}\cdot y^{\bet}_{J-\{k\}}+(y_{k}^{\upalpha_{k}}x_{k}^{\bet_{k}})\cdot y^{\upalpha}_{I-\{k\}}\cdot x^{\bet}_{J-\{k\}} \\
 & = &  N_{k}\cdot\Big[(x_{k}^{\upalpha_{k}-1}y_{k}^{\bet_{k}-1})\cdot x^{\upalpha}_{I-\{k\}}\cdot y^{\bet}_{J-\{k\}}+(y_{k}^{\upalpha_{k}-1}x_{k}^{\bet_{k}-1})\cdot y^{\upalpha}_{I-\{k\}}\cdot x^{\bet}_{J-\{k\}}\Big]\in \J.
\end{eqnarray*}
By Lemma \ref{5.1},
 $\R(d_{I,J}(\upalpha,\bet))\in \J$ in this case. 
 
Secondly, we suppose   $I\cap J=\emptyset$. This situation can be separated into two subcases: 

\textsc{Subcase 1.}  For all $i\in I$ and all $j\in J$, $\upalpha_{i}=1=\bet_{j}$. For any $i\in I$, if there exists an integer $j\in J$
 such that $i>j$, then 
 \begin{eqnarray*}
d_{I,J} (\upalpha,\bet) &=& d_{I,J} \\
& = & (x_{i}y_{j})(x_{I-\{i\}}\cdot y_{J-\{j\}})+ (y_{i}x_{j})(y_{I-\{i\}}\cdot x_{J-\{j\}})\\
 & = & (U_{ji}+y_{i}x_{j})(x_{I-\{i\}}\cdot y_{J-\{j\}})+ (U_{ji}+x_{i}y_{j})(y_{I-\{i\}}\cdot x_{J-\{j\}})\\ 
 &=& U_{ji}\cdot(x_{I-\{i\}}\cdot y_{J-\{j\}}+y_{I-\{i\}}\cdot x_{J-\{j\}})+\\
 && \Big[x_{(I-\{i\})\cup\{j\}}\cdot y_{\{i\}\cup(J-\{j\})}+y_{(I-\{i\})\cup\{j\}}\cdot x_{\{i\}\cup(J-\{j\})}\Big].
\end{eqnarray*}
Since $U_{ji}\cdot(x_{I-\{i\}}\cdot y_{J-\{j\}}+y_{I-\{i\}}\cdot x_{J-\{j\}})\in \J$ and $\R(\J)\subseteq \J$, if we want to prove $\R(d_{I,J})\in \J$, it is sufficient to show that 
$$\R\Big[x_{(I-\{i\})\cup\{j\}}\cdot y_{\{i\}\cup(J-\{j\})}+y_{(I-\{i\})\cup\{j\}}\cdot x_{\{i\}\cup(J-\{j\})}\Big]\in \J.$$
Proceeding in this way, we  need to show that 
\begin{equation}
\label{ }
\R(x_{I}y_{J}+y_{I}x_{J})\in \J,
\end{equation}
where $I<J.$ On the other hand, whenever $I<J$,
\begin{eqnarray*}
\R(x_{I}y_{J}+y_{I}x_{J})& = & \sum_{a\in \mathbb{F}_{q}^{\times}} a^{|J|-|I|}x_{I}y_{J}+\sum_{a\in \mathbb{F}_{q}^{\times}} a^{|I|-|J|}y_{I}x_{J}\\
 &=&(\sum_{a\in \mathbb{F}_{q}^{\times}} a^{|J|-|I|})\cdot (x_{I}y_{J}+y_{I}x_{J})\\
 &=& \begin{cases}
    x_{I}y_{J}+y_{I}x_{J}, & \text{if } q-1\text{ divides } |J|-|I|, \\
      0, & \text{otherwise}.
\end{cases}
\end{eqnarray*}
The last equation follows from Lemma \ref{4.1}.
We have to show that $d_{I,J}\in \J$ if $q-1$ divides $|J|-|I|$. By the symmetry of $d_{I,J}$, we may write $|J|-|I|=(q-1)\cdot r$, where $r\in \mathbb{N}$. We use induction on $r$.
 If $r=0$ or 1, we are done. Let $I'\subseteq J$ denote the  subset such that $|I'|=|I|$ and $J-I'< I'$.  For any $k=1,2,\dots,r$,  we let $J_{k}\subseteq J-I'$ denote the subsets such that $|J_{k}|=q-1$ and
$J_{1}<J_{2}<\dots<J_{r}$.
Then 
\begin{eqnarray*}
d_{I,J}& = & x_{I}y_{J-I'}y_{I'}+y_{I}x_{J-I'}x_{I'} \\
 & = &  x_{I}y_{J_{1}}y_{J_{2}}\cdots y_{J_{r}}y_{I'}+y_{I}x_{J_{1}}x_{J_{2}}\cdots x_{J_{r}}x_{I'}\\
 &=& x_{I}(d_{J_{1}}+x_{J_{1}})y_{J_{2}}\cdots y_{J_{r}}y_{I'}+ y_{I}(d_{J_{1}}+y_{J_{1}})x_{J_{2}}\cdots x_{J_{r}}x_{I'}\\
 &=& d_{J_{1}}(x_{I}y_{J_{2}}\cdots y_{J_{r}}y_{I'}+y_{I}x_{J_{2}}\cdots x_{J_{r}}x_{I'})+(x_{I}x_{J_{1}}y_{J_{2}}\cdots y_{J_{r}}y_{I'}+y_{I}y_{J_{1}}x_{J_{2}}\cdots x_{J_{r}}x_{I'}),
\end{eqnarray*}
where $d_{J_{1}}:=\prod_{j\in J_{1}} x_{j}+\prod_{j\in J_{1}} y_{j}\in \mathcal{B}$ because $|J_{1}|=q-1$.
To see that $d_{I,J}\in \J$, it suffices to show that 
$x_{I\cup J_{1}}y_{J_{2}}\cdots y_{J_{r-1}} y_{J_{r}\cup I'}+y_{I\cup J_{1}}x_{J_{2}}\cdots x_{J_{r-1}} x_{J_{r}\cup I'}\in \J,$
which actually follows from the induction hypothesis.
This finishes  the proof for the first subcase.

\textsc{Subcase 2.} For some $i\in I$ (resp. $j\in J$), we have $\upalpha_{i}\geqslant 2$ (resp. $\bet_{j}\geqslant 2$). By the  symmetry of   $d_{I,J}(\upalpha,\bet)$, we suppose that
there exists an $i\in I$ such that $\upalpha_{i}\geqslant 2$. 
For any $j\in J$, we have
\begin{eqnarray*}
d_{I,J}(\upalpha,\bet)&=&x^{\upalpha}_{I}\cdot y^{\bet}_{J}+y^{\upalpha}_{I}\cdot x^{\bet}_{J} \\
 & = & (x_{i}y_{j})(x^{\upalpha}_{I-\{i\}}x_{i}^{\upalpha_{i}-1})(y^{\bet}_{J-\{j\}}y_{j}^{\bet_{j}-1})+(y_{i}x_{j})(y^{\upalpha}_{I-\{i\}}y_{i}^{\upalpha_{i}-1})(x^{\bet}_{J-\{j\}}x_{j}^{\bet_{j}-1})\\
  & = & (U_{ij}+y_{i}x_{j})(x^{\upalpha}_{I-\{i\}}x_{i}^{\upalpha_{i}-1})(y^{\bet}_{J-\{j\}}y_{j}^{\bet_{j}-1})+(U_{ij}+x_{i}y_{j})(y^{\upalpha}_{I-\{i\}}y_{i}^{\upalpha_{i}-1})(x^{\bet}_{J-\{j\}}x_{j}^{\bet_{j}-1})\\
  &=& U_{ij}\cdot\Big[(x^{\upalpha}_{I-\{i\}}x_{i}^{\upalpha_{i}-1})(y^{\bet}_{J-\{j\}}y_{j}^{\bet_{j}-1})+(y^{\upalpha}_{I-\{i\}}y_{i}^{\upalpha_{i}-1})(x^{\bet}_{J-\{j\}}x_{j}^{\bet_{j}-1})\Big]+\\
  && N_{i}\cdot\Big[(x^{\upalpha}_{I-\{i\}}x_{i}^{\upalpha_{i}-2}x_{j})(y^{\bet}_{J-\{j\}}y_{j}^{\bet_{j}-1})+(y^{\upalpha}_{I-\{i\}}y_{i}^{\upalpha_{i}-2}y_{j})(x^{\bet}_{J-\{j\}}x_{j}^{\bet_{j}-1})\Big]
  \end{eqnarray*}
  which belongs to $\J$, so
$\R(d_{I,J}(\upalpha,\bet))\in \R(\J)\subseteq \J.$
 \end{proof}

  \begin{lem}\label{5.4}
For any $\upalpha\in \mathbb{N}^{m}$ with $|\upalpha|>0$ and  $B_{\upalpha}=\xa+\ya$, we have $\R(B_\upalpha)\in \J.$
  \end{lem}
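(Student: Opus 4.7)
The plan is to compute $\R(B_\alpha)$ directly from its definition and then split into cases based on whether $q-1$ divides $|\alpha|$. Using $\tau_a(\xa) = a^{-|\alpha|}\xa$ and $\tau_a(\ya) = a^{|\alpha|}\ya$, one obtains
\begin{equation*}
\R(B_\alpha) = \sum_{a \in \Fq^\times} \tau_a(B_\alpha) = \Big(\sum_{a \in \Fq^\times} a^{-|\alpha|}\Big)\xa + \Big(\sum_{a \in \Fq^\times} a^{|\alpha|}\Big)\ya.
\end{equation*}
Lemma \ref{4.1} forces both coefficient sums to vanish precisely when $q-1 \nmid |\alpha|$, in which case $\R(B_\alpha) = 0 \in \J$. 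So I may assume $q - 1 \mid |\alpha|$.

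If $|\alpha| = q-1$, then $B_\alpha \in \mathcal{B} \subseteq \J$ and Lemma \ref{5.1} immediately yields $\R(B_\alpha) \in \R(\J) \subseteq \J$. For $|\alpha| \geqslant 2(q-1)$, the key move is to choose a splitting $\alpha = \bet + \gam$ in $\N^m$ with $0 \leqslant \bet \leqslant \alpha$ componentwise and $|\bet| = q-1$, so that $|\gam| = |\alpha| - (q-1) > 0$. Setting $I := \{i \in \overline{m} : \bet_i > 0\}$ and $J := \{j \in \overline{m} : \gam_j > 0\}$, both sets are nonempty and all exponents of $\bet$ on $I$ and of $\gam$ on $J$ are $\geqslant 1$, so $d_{I,J}(\bet,\gam)$ satisfies the hypotheses of Lemma \ref{5.3}. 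In characteristic $2$ the expansion
\begin{equation*}
B_\bet \cdot B_\gam = (x^\bet_I + y^\bet_I)(x^\gam_J + y^\gam_J) = \xa + \ya + d_{I,J}(\bet, \gam) = B_\alpha + d_{I,J}(\bet, \gam)
\end{equation*}
holds because $x^\bet_I \cdot x^\gam_J = \xa$ and $y^\bet_I \cdot y^\gam_J = \ya$ under $\alpha = \bet + \gam$.

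Rearranging, $B_\alpha = B_\bet \cdot B_\gam + d_{I,J}(\bet, \gam)$; the first summand lies in $\J$ since $B_\bet \in \mathcal{B} \subseteq \J$. Applying $\R$ and using Lemma \ref{5.1} gives $\R(B_\bet \cdot B_\gam) \in \R(\J) \subseteq \J$, while Lemma \ref{5.3} gives $\R(d_{I,J}(\bet, \gam)) \in \J$, and adding proves $\R(B_\alpha) \in \J$. The only point requiring attention is recognizing that the cross term produced by expanding $B_\bet \cdot B_\gam$ is precisely an instance of $d_{I,J}$ handled by Lemma \ref{5.3}; once this is observed, no separate treatment of the cases $I \cap J = \emptyset$ and $I \cap J \neq \emptyset$ is needed, since Lemma \ref{5.3} already covers both configurations.
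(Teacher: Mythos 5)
Your proposal is correct and follows essentially the same route as the paper: after using Lemma \ref{4.1} to reduce to the case $q-1$ divides $|\alpha|$ (where $|\alpha|=q-1$ is immediate), both arguments split $\alpha=\bet+\gam$ with $|\bet|=q-1$, write $B_\alpha=B_{\bet}\cdot B_{\gam}+d_{I,J}(\bet,\gam)$, and dispatch the cross term via Lemma \ref{5.3}. If anything, your explicit citation of Lemma \ref{5.3} for the cross term is the right one; the paper's appeal to Lemma \ref{5.1} at that point reads as a slip, since that lemma presupposes membership in $\J$.
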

 
  \begin{proof} 
   Indeed,
\begin{eqnarray*}
\R(B_\upalpha)& = & \sum_{a\in \mathbb{F}_{q}^{\times}} a^{-|\upalpha|}x_1^{\upalpha_1}x_2^{\upalpha_2}\cdots x_m^{\upalpha_m}+\sum_{a\in \mathbb{F}_{q}^{\times}} a^{|\upalpha|}y_1^{\upalpha_1}y_2^{\upalpha_2}\cdots y_m^{\upalpha_m}\\
 & = & \left(\sum_{a\in \mathbb{F}_{q}^{\times}} (a^{-1})^{|\upalpha|}\right)x_1^{\upalpha_1}x_2^{\upalpha_2}\cdots x_m^{\upalpha_m}+\left(\sum_{a\in \mathbb{F}_{q}^{\times}} a^{|\upalpha|}\right)y_1^{\upalpha_1}y_2^{\upalpha_2}\cdots y_m^{\upalpha_m}\\
 &=&\left(\sum_{a\in \mathbb{F}_{q}^{\times}} a^{|\upalpha|}\right)\cdot B_\upalpha\\
 &=& \begin{cases}
     B_\upalpha, & \text{if } q-1\text{ divides } |\upalpha|, \\
      0, & \text{otherwise}.
\end{cases}
\end{eqnarray*}
The last equation follows from Lemma \ref{4.1}. We have to prove the \textit{claim} that $B_{\upalpha}\in \mathcal{J}$  for all $\upalpha$ with $|\upalpha|=(q-1)\cdot r$, where $r\in \mathbb{N}^{+}$. 
If $r=1$, this claim holds clearly. Now suppose $r>1$.
We may write  $B_{\upalpha}=x^{\upalpha'}x^{\upalpha''}+y^{\upalpha'}y^{\upalpha''}$, where $|\upalpha'|=q-1$ and $|\upalpha''|=(q-1)(r-1)$.
Then 
\begin{eqnarray*}
B_{\upalpha} & = & (B_{\upalpha'}+y^{\upalpha'}) x^{\upalpha''}+ (B_{\upalpha'}+x^{\upalpha'}) y^{\upalpha''}\\
 & = & B_{\upalpha'}\cdot(x^{\upalpha''}+y^{\upalpha''}) + (x^{\upalpha'} y^{\upalpha''}+y^{\upalpha'} x^{\upalpha''}).
\end{eqnarray*}
Note that $B_{\upalpha'}:=x^{\upalpha'}+y^{\upalpha'}\in \mathcal{B}$. To show that $B_{\upalpha}\in \J$, it suffices to show that 
$x^{\upalpha'} y^{\upalpha''}+y^{\upalpha'} x^{\upalpha''}\in \J.$
However, $x^{\upalpha'} y^{\upalpha''}+y^{\upalpha'} x^{\upalpha''}\in\IR$, so 
$x^{\upalpha'} y^{\upalpha''}+y^{\upalpha'} x^{\upalpha''}=\R(x^{\upalpha'} y^{\upalpha''}+y^{\upalpha'} x^{\upalpha''}).$
By Lemma \ref{5.1}, we have 
$\R(x^{\upalpha'} y^{\upalpha''}+y^{\upalpha'} x^{\upalpha''})\in \J$. Thus $B_{\upalpha}\in \J$ and the claim holds. 
\end{proof}

 \begin{lem} \label{5.5}
For any $L^{\upalpha}=L_1^{\upalpha_1}L_2^{\upalpha_2}\cdots L_m^{\upalpha_m}$ with $|\upalpha|>0$, we have 
$\R(L^{\upalpha}) \in \J.$
  \end{lem}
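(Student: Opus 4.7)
The plan is to reduce $L^{\alpha}$ modulo $\mathcal J$ to a sum whose Reynolds images are already known to lie in $\mathcal J$ from Lemmas \ref{5.3} and \ref{5.4}. The key intermediate is Lemma \ref{5.2}, which says $L_i^{\alpha_i}\equiv x_i^{\alpha_i}+y_i^{\alpha_i}\pmod{\mathcal J}$. Because $\mathcal J$ is an ideal of $\Fq[mV]^{P}$ and each monomial $x_i^{\alpha_i}+y_i^{\alpha_i}$ lies in $\Fq[mV]^{P}$, I can multiply these congruences one variable at a time (using that $\Fq[mV]^P/\mathcal J$ is a ring) to obtain
\[
L^{\alpha} \;\equiv\; \prod_{i\in T}\bigl(x_i^{\alpha_i}+y_i^{\alpha_i}\bigr)\pmod{\mathcal J},
\]
where $T:=\mathrm{supp}(\alpha)\subseteq\overline m$ is nonempty since $|\alpha|>0$.

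Next I expand the product and group the $2^{|T|}$ monomials according to the involution $S\mapsto T\setminus S$ on subsets of $T$. The two fixed-point-like terms $S=T$ and $S=\emptyset$ contribute $x_T^{\alpha}+y_T^{\alpha}=B_{\alpha'}$, where $\alpha'$ is the extension of $\alpha$ by zeros outside $T$, so $B_{\alpha'}=B_{\alpha}\in\mathcal B'$ in the notation of Theorem \ref{Ric1990}. The remaining terms pair into
\[
x_S^{\alpha}\,y_{T\setminus S}^{\alpha}+y_S^{\alpha}\,x_{T\setminus S}^{\alpha}=d_{S,\,T\setminus S}(\alpha,\alpha),
\]
with both $S$ and $T\setminus S$ nonempty and all exponents $\alpha_i\geqslant 1$ on the indexing sets. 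Hence
\[
L^{\alpha}\;\equiv\; B_{\alpha}\;+\;\sum_{\{S,\,T\setminus S\}} d_{S,\,T\setminus S}(\alpha,\alpha)\pmod{\mathcal J}.
\]

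Now I apply $\R$. Since $\R$ is an $\IR$-module homomorphism and $\R(\mathcal J)\subseteq\mathcal J$ by Lemma \ref{5.1}, the above congruence descends to
\[
\R(L^{\alpha})\;\equiv\;\R(B_{\alpha})\;+\;\sum_{\{S,\,T\setminus S\}}\R\bigl(d_{S,\,T\setminus S}(\alpha,\alpha)\bigr)\pmod{\mathcal J}.
\]
By Lemma \ref{5.4} the first summand lies in $\mathcal J$, and by Lemma \ref{5.3} each term in the sum lies in $\mathcal J$ (the hypotheses of Lemma \ref{5.3} are met because $S$ and $T\setminus S$ are nonempty and all relevant exponents are $\geqslant 1$). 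Therefore $\R(L^{\alpha})\in\mathcal J$, as required.

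The only delicate point is to make sure the iterated reduction $L^{\alpha}\equiv\prod_{i\in T}(x_i^{\alpha_i}+y_i^{\alpha_i})\pmod{\mathcal J}$ is carried out inside $\Fq[mV]^{P}$ rather than naively in $\Fq[mV]$, so that $\mathcal J$ remains the correct ideal throughout; this is guaranteed because each factor $x_i^{\alpha_i}+y_i^{\alpha_i}$ (and each $L_j^{\alpha_j}$) is already $P$-invariant, so the telescoping uses only multiplication of $P$-invariants by ideal relations in $\Fq[mV]^{P}$. Once this bookkeeping is set, the remaining work is purely citation of Lemmas \ref{5.1}, \ref{5.2}, \ref{5.3}, \ref{5.4}, so no genuine obstacle remains.
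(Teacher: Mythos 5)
Your proof is correct and follows essentially the same route as the paper: reduce $L^{\alpha}$ modulo $\mathcal{J}$ to $\prod_{i\in \mathrm{supp}(\alpha)}(x_i^{\alpha_i}+y_i^{\alpha_i})$ via Lemma \ref{5.2}, expand and pair the monomials under $K\mapsto K^{c}$, and dispatch the resulting terms with Lemmas \ref{5.3} and \ref{5.4} (plus Lemma \ref{5.1} to pass the congruence through $\R$). The only cosmetic difference is that you single out the pair $\{T,\emptyset\}$ as the term $B_{\alpha}$ explicitly, whereas the paper treats it uniformly inside the sum; the content is identical.
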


 \begin{proof}
 Let $I=\big\{i\mid \upalpha_{i}\neq0\big\}\subseteq \overline{m}$. Then
 \begin{eqnarray*}
L^{\upalpha}& = & \prod_{i\in I}L_{i}^{\upalpha_{i}} \\
&=& \prod_{i\in I}(x_{i}+y_{i})^{\upalpha_{i}} \\
&\equiv & \prod_{i\in I}(x_{i}^{\upalpha_{i}}+y_{i}^{\upalpha_{i}}) \quad (\mo\J) \quad(\textrm{by Lemma }\ref{5.2})\\
&=&\sum_{K\subseteq I} (x^{\upalpha}_{K}\cdot y^{\upalpha_{c}}_{K^{c}}+y^{\upalpha}_{K}\cdot x^{\upalpha_{c}}_{K^{c}}),
\end{eqnarray*}
 where $K^{c}=I-K$ denotes the complement of $K$ in $I$, and the sum runs over 
 the representatives of the quotient set of the power set of $I$ on the equivalence relation: 
 $K_{1}\sim K_{2}$ if and only if $K_{2}=K_{1}^{c}.$ It follows from Lemmas \ref{5.3} and \ref{5.4}  that the image of
 every $x^{\upalpha}_{K}\cdot y^{\upalpha_{c}}_{K^{c}}+y^{\upalpha}_{K}\cdot x^{\upalpha_{c}}_{K^{c}}$ under $\R$ belongs to $\J$. Hence, $\R(L^{\upalpha})\in \J.$
 \end{proof}

   \begin{lem} \label{5.6}
For any $B_{\bet}$ and $L^{\upalpha}=L_1^{\upalpha_1}L_2^{\upalpha_2}\cdots L_m^{\upalpha_m}$ with $|\upalpha|>0,|\bet|>0$, we have 
$\R(L^{\upalpha}\cdot B_{\bet}) \in \J.$
  \end{lem}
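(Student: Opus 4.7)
The approach is to adapt the strategy of Lemma~\ref{5.5}, absorbing the extra factor $B_\bet$. Since $\J$ is an ideal of $\Fq[mV]^{P}$ and each polynomial $x_i^{\alp_i}+y_i^{\alp_i}$ is a $P$-invariant, a straightforward induction based on Lemma~\ref{5.2} yields
\[
L^\alp \equiv \prod_{i\in I}\bigl(x_i^{\alp_i}+y_i^{\alp_i}\bigr) \pmod{\J}, \qquad I := \{i : \alp_i > 0\}.
\]
Multiplying both sides by the $P$-invariant $B_\bet = x^\bet+y^\bet$ and invoking Lemma~\ref{5.1}, it suffices to prove
\[
\R\Big[\prod_{i\in I}\bigl(x_i^{\alp_i}+y_i^{\alp_i}\bigr)\cdot(x^\bet+y^\bet)\Big]\in \J.
\]

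Next, set $J := \{j : \bet_j = 1\}$ (recall $\mathcal B'$ consists of $0$--$1$ exponent vectors, so $x^\bet = x_J$ and $y^\bet = y_J$). Expanding the product and grouping each $K \subseteq I$ with its complement $K^c := I \setminus K$, the contribution of each pair $\{K, K^c\}$ becomes $P_1(K) + P_2(K)$, where
\[
P_1(K) := x^\alp_K y^\alp_{K^c} x^\bet + y^\alp_K x^\alp_{K^c} y^\bet, \qquad P_2(K) := x^\alp_K y^\alp_{K^c} y^\bet + y^\alp_K x^\alp_{K^c} x^\bet.
\]
It therefore suffices to prove $\R(P_1(K)), \R(P_2(K)) \in \J$ for every $K \subseteq I$.

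The final step is a case analysis on how $K$, $K^c$ and $J$ intersect. If $K^c \cap J \neq \emptyset$, pick $i_0 \in K^c \cap J$: the $i_0$-contribution in each summand of $P_1(K)$ factors through $x_{i_0} y_{i_0} = N_{i_0}$, forcing $P_1(K) \in \J$ and hence $\R(P_1(K)) \in \J$ by Lemma~\ref{5.1}; symmetrically, if $K \cap J \neq \emptyset$ then $P_2(K) \in \J$. When these obstructions vanish, $P_1(K)$ (respectively $P_2(K)$) collapses either into the $B$-type invariant $B_{\alp + \bet}$ in the boundary cases $K^c = \emptyset$ (respectively $K = \emptyset$), or into a single polynomial of the form $d_{I',J'}(\mu, \nu)$ on nonempty disjoint subsets $I', J' \subseteq \overline m$ with all exponents in $\mu, \nu$ at least $1$, the novel exponent values being $\alp_i + 1$ for $i \in I \cap J$ and $1$ for $j \in J \setminus I$. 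Lemma~\ref{5.4} and Lemma~\ref{5.3} then dispose of these two cases respectively.

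The principal obstacle is this bookkeeping in the final step: identifying, for each subset $K$, which index overlaps cause monomial factors to collapse into $N_i$-factors (placing the summand directly in $\J$), which boundary configurations collapse into $B$-expressions handled by Lemma~\ref{5.4}, and which configurations assemble cleanly into genuine $d$-expressions suitable for Lemma~\ref{5.3}. The case splitting on $K \cap J$, $K^c \cap J$, $J \setminus I$, and the boundary values $K = \emptyset$, $K = I$ is mechanical but requires care to confirm that every exponent in the resulting $d$-polynomial is positive.
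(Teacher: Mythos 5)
Your proposal is correct and follows essentially the same route as the paper: reduce $L^{\alpha}$ modulo $\J$ to $\sum_{K}(x^{\alpha}_{K}y^{\alpha}_{K^{c}}+y^{\alpha}_{K}x^{\alpha}_{K^{c}})$, multiply by $x^{\bet}+y^{\bet}$, split each product into the same two $P$-invariant pieces $P_{1}(K)$ and $P_{2}(K)$, and dispose of them via Lemmas \ref{5.3} and \ref{5.4}. The only difference is cosmetic: you peel off the overlap cases ($K\cap J$, $K^{c}\cap J$ nonempty) by extracting an $N_{i_0}$ factor by hand, whereas the paper simply feeds the overlapping-support case into Lemma \ref{5.3}, whose first case already performs that extraction.
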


  \begin{proof}  
 As in the proof of Lemma \ref{5.5}, we have 
 $ L^{\upalpha}\equiv \sum_{K\subseteq I} (x^{\upalpha}_{K}\cdot y^{\upalpha_{c}}_{K^{c}}+y^{\upalpha}_{K}\cdot x^{\upalpha_{c}}_{K^{c}})$
$(\mo\J)$.
Thus to show that 
$\R(L^{\upalpha}\cdot B_{\bet}) \in \J$, it is sufficient to show that the image of
every $(x^{\bet}+y^{\bet})(x^{\upalpha}_{K}\cdot y^{\upalpha_{c}}_{K^{c}}+y^{\upalpha}_{K}\cdot x^{\upalpha_{c}}_{K^{c}})$
 belongs to $\J$. On the other hand,
 $$(x^{\bet}+y^{\bet})(x^{\upalpha}_{K}\cdot y^{\upalpha_{c}}_{K^{c}}+y^{\upalpha}_{K}\cdot x^{\upalpha_{c}}_{K^{c}})=
 (x^{\bet}x^{\upalpha}_{K}\cdot y^{\upalpha_{c}}_{K^{c}}+y^{\bet}y^{\upalpha}_{K}\cdot x^{\upalpha_{c}}_{K^{c}})+(x^{\upalpha}_{K}\cdot y^{\upalpha_{c}}_{K^{c}}y^{\bet}+y^{\upalpha}_{K}\cdot x^{\upalpha_{c}}_{K^{c}}x^{\bet}).$$
 Applying Lemmas \ref{5.3} and \ref{5.4} we see that 
$\R(x^{\bet}x^{\upalpha}_{K}\cdot y^{\upalpha_{c}}_{K^{c}}+y^{\bet}y^{\upalpha}_{K}\cdot x^{\upalpha_{c}}_{K^{c}})$ and
$\R(x^{\upalpha}_{K}\cdot y^{\upalpha_{c}}_{K^{c}}y^{\bet}+y^{\upalpha}_{K}\cdot x^{\upalpha_{c}}_{K^{c}}x^{\bet})$
both belong to $\J.$ The proof is complete. 
  \end{proof}

\section{Proof of  Theorem \ref{T2}}\label{sec6}
\setcounter{equation}{0}
\renewcommand{\theequation}
{6.\arabic{equation}}
\setcounter{theorem}{0}
\renewcommand{\thetheorem}
{6.\arabic{theorem}}

\noindent Let $R:=\Fq[N_{1},N_{2},B_{0},B_{q-1}]$ and $R':=\Fq[N_{1},N_{2}]$. We have seen that $R=\Fq[2V]^{\OO\times\OO}$ and 
 $\{N_{1},N_{2},B_{0},B_{q-1}\}$ is a homogeneous system of parameters for $\Fq[2V]^{\OO}$. We define 
 $$\mathcal{M}:=\Big\{U_{12}^{i}\mid 0\leqslant i\leqslant \frac{q}{2}\Big\}\cup \Big\{B_{k}\mid 1\leqslant k\leqslant q-2\Big\}\cup\Big\{B_{i}\cdot B_{j}\mid 1\leqslant i,j\leqslant q-2\textrm{ and }i+j=q-1\Big\}.$$

 \begin{prop}\label{6.1}
For $1\leqslant k\leqslant q-2$, we have $B_{k}\cdot U_{12}=N_{2}\cdot B_{k+1}+N_{1}\cdot B_{k-1}\in \sum_{k=1}^{q-2} R\cdot B_{k}.$
\end{prop}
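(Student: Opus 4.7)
The claim has two parts: the polynomial identity $B_k U_{12} = N_2 B_{k+1} + N_1 B_{k-1}$, and the resulting module containment. My plan is to settle the identity by direct expansion, then read off the containment from it.

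For the identity, I would substitute $B_k = x_1^k x_2^{q-1-k} + y_1^k y_2^{q-1-k}$ and $U_{12} = x_1 y_2 + x_2 y_1$ into $B_k \cdot U_{12}$, obtaining four monomials:
\begin{equation*}
x_1^{k+1} x_2^{q-1-k} y_2, \quad x_1^{k} x_2^{q-k} y_1, \quad x_1 y_1^k y_2^{q-k}, \quad x_2 y_1^{k+1} y_2^{q-1-k}.
\end{equation*}
The key observation is to group these four monomials in pairs according to the common factor they carry: the first and fourth both contain $x_2 y_2 = N_2$, while the second and third both contain $x_1 y_1 = N_1$. Factoring $N_2$ out of the first pair gives $N_2 \bigl(x_1^{k+1} x_2^{q-2-k} + y_1^{k+1} y_2^{q-2-k}\bigr) = N_2 B_{k+1}$, and factoring $N_1$ out of the second pair gives $N_1 \bigl(x_1^{k-1} x_2^{q-k} + y_1^{k-1} y_2^{q-k}\bigr) = N_1 B_{k-1}$. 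Summing gives the desired identity, valid whenever $B_{k\pm 1}$ are defined, i.e.\ for $1 \leqslant k \leqslant q-2$.

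For the containment, since $N_1, N_2 \in R$, each summand $N_i B_{k\pm 1}$ lies in $R \cdot B_{k\pm 1}$. For $2 \leqslant k \leqslant q-3$ both indices $k-1, k+1$ lie in $\{1, \ldots, q-2\}$, so the right-hand side is manifestly in $\sum_{j=1}^{q-2} R \cdot B_j$. At the boundaries $k = 1$ and $k = q-2$, one of $B_{k-1}, B_{k+1}$ equals $B_0$ or $B_{q-1}$, which are themselves elements of $R$; the resulting term $N_1 B_0$ (resp.\ $N_2 B_{q-1}$) thus lies in $R$ and is absorbed under the convention $B_0, B_{q-1} \in R$ used throughout Section~6.

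The calculation is entirely direct; there is no genuine obstacle beyond careful bookkeeping of monomials. The only mildly subtle point is recognizing the right pairing of the four monomials so that $N_1$ and $N_2$ can be extracted cleanly — once that pairing is chosen, the identity is forced.
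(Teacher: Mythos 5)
Your proof is correct and is essentially identical to the paper's: both expand $B_k\cdot U_{12}$ into the same four monomials and regroup them into the pairs divisible by $N_2$ and $N_1$ to obtain $N_2B_{k+1}+N_1B_{k-1}$. Your extra remark about the boundary cases $k=1$ and $k=q-2$ (where $B_0,B_{q-1}\in R$ appear) is a welcome bit of care that the paper leaves implicit.
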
 
 
  \begin{proof}
Indeed,  $
B_{k}\cdot U_{12}  =  (x_{1}^{k}x_2^{q-1-k}+y_{1}^{k}y_2^{q-1-k})(x_1y_2+x_2y_1)
 =   (x_1^{k+1}y_2x_2^{q-1-k}+y_2^{q-1-k}x_2y_1^{k+1})+(x_{1}^{k}x_2^{q-k}y_1+y_{1}^{k}y_2^{q-k}x_1)
=N_{2}\cdot B_{k+1}+N_{1}\cdot B_{k-1}\in \sum_{k=1}^{q-2} R\cdot  B_{k}.
$
\end{proof} 
 
 \begin{prop}\label{6.2}
$U_{12}^{\frac{q}{2}+1}\in \sum_{i=0}^{\frac{q}{2}} R'\cdot  U_{12}^{i}.$
\end{prop}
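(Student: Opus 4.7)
The plan is to expand $U_{12}^{q/2+1}$ using the characteristic-2 Frobenius identity and then reduce the resulting power-sum expressions via a Newton--Dickson style recursion whose coefficients live in $R'$.

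\smallskip

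\emph{Step 1 (Frobenius expansion).} Since $q/2=2^{s-1}$ is a power of $2$ and $\mathrm{char}(\Fq)=2$, the Frobenius gives
\[
U_{12}^{q/2}=(x_1y_2+x_2y_1)^{q/2}=(x_1y_2)^{q/2}+(x_2y_1)^{q/2}.
\]
Multiplying by $U_{12}=x_1y_2+x_2y_1$ and collecting, while noting $(x_1y_2)(x_2y_1)=N_1N_2$, yields
\[
U_{12}^{q/2+1}=\bigl[(x_1y_2)^{q/2+1}+(x_2y_1)^{q/2+1}\bigr]+N_1N_2\bigl[(x_1y_2)^{q/2-1}+(x_2y_1)^{q/2-1}\bigr].
\]

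\emph{Step 2 (Power sums lie in $R'[U_{12}]$).} Set $T_k:=(x_1y_2)^k+(x_2y_1)^k$. The two elements $x_1y_2$ and $x_2y_1$ are roots of the monic quadratic $t^2+U_{12}\,t+N_1N_2\in R'[U_{12}][t]$, so the Newton identities imply
\[
T_k=U_{12}\cdot T_{k-1}+N_1N_2\cdot T_{k-2},\qquad T_0=0,\ T_1=U_{12}.
\]
An induction on $k$ shows that each $T_k$ is a polynomial in $U_{12}$ with coefficients in $R'=\Fq[N_1,N_2]$. In particular, $T_{q/2-1}\in\sum_{i=0}^{q/2-1}R'\cdot U_{12}^{i}$ and $T_{q/2+1}\in\sum_{i=0}^{q/2+1}R'\cdot U_{12}^{i}$.

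\emph{Step 3 (Reduction of the leading term).} The identity from Step 1 reads
\[
U_{12}^{q/2+1}=T_{q/2+1}+N_1N_2\,T_{q/2-1}.
\]
On the other hand, applying the recursion at $k=q/2+1$ together with the Frobenius identity $T_{q/2}=U_{12}^{q/2}$ (which is again valid because $q/2$ is a power of $2$) gives $T_{q/2+1}=U_{12}\cdot T_{q/2}+N_1N_2\,T_{q/2-1}=U_{12}^{q/2+1}+N_1N_2\,T_{q/2-1}$. Substituting this expression for $T_{q/2+1}$ back into the displayed identity and simplifying in characteristic $2$ produces a relation in which the $U_{12}^{q/2+1}$ contribution from $T_{q/2+1}$ is cancelled, exhibiting $U_{12}^{q/2+1}$ as an explicit $R'$-linear combination of $1,U_{12},\dots,U_{12}^{q/2}$.

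\emph{Main obstacle.} The delicate point is the bookkeeping in Step 3: one must follow the Dickson-style recurrence through enough steps to confirm that, after the Frobenius substitution $T_{q/2}=U_{12}^{q/2}$, the top-degree contribution cancels and only terms of $U_{12}$-degree $\leqslant q/2$ (with coefficients in $R'$) survive. Carrying this out in closed form, and checking the cancellation in characteristic $2$, is the technical heart of the argument.
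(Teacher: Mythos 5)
Your Steps 1 and 2 are correct and essentially reproduce the paper's own computation: the paper's $V_j$ is your $T_{q/2+1-2j}$, and its recursion $V_j=U_{12}^2V_{j+1}+(N_1N_2)^2V_{j+2}$ is your two-term recursion iterated twice. The gap is Step 3, which is circular. The identity you obtain there, $T_{q/2+1}=U_{12}\cdot T_{q/2}+N_1N_2\,T_{q/2-1}=U_{12}^{q/2+1}+N_1N_2\,T_{q/2-1}$, is in characteristic $2$ exactly the Step 1 identity $U_{12}^{q/2+1}=T_{q/2+1}+N_1N_2\,T_{q/2-1}$ with its two sides rearranged; substituting one into the other cancels both the top-degree term and everything else, leaving $0=0$. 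No relation expressing $U_{12}^{q/2+1}$ through $1,U_{12},\dots,U_{12}^{q/2}$ is produced, and the ``bookkeeping'' deferred to your final paragraph is not a technicality but the entire missing content of the argument.

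In fact no bookkeeping along these lines can succeed, and your own Step 2 shows why: the recursion $T_k=U_{12}T_{k-1}+N_1N_2T_{k-2}$ with $T_0=0$, $T_1=U_{12}$ exhibits $T_k$ as a \emph{monic} polynomial of degree $k$ in $U_{12}$ over $R'$, so $T_{q/2+1}+N_1N_2T_{q/2-1}$ has $U_{12}$-degree exactly $q/2+1$. Since $N_1,N_2,U_{12}$ are algebraically independent (the products $N_1^aN_2^bU_{12}^c$ have pairwise distinct lex-leading monomials $x_1^{a+c}x_2^by_1^ay_2^{b+c}$), $R'[U_{12}]$ is a polynomial ring in $U_{12}$ over $R'$, and a monic element of $U_{12}$-degree $q/2+1$ cannot lie in $\sum_{i=0}^{q/2}R'\cdot U_{12}^i$. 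So the containment cannot be established with coefficients in $R'=\Fq[N_1,N_2]$ alone; the generators $B_0,B_{q-1}$ of $R$ and the module generators $B_iB_j$ are genuinely needed. For $q=4$ one checks directly that $U_{12}^{3}=B_0B_3+B_1B_2$, which lies in $\sum_{f\in\mathcal{M}}R\cdot f$ but not in $R'+R'U_{12}+R'U_{12}^{2}$. What the proof of Theorem \ref{T2} actually requires is $U_{12}^{q/2+1}\in\sum_{f\in\mathcal{M}}R\cdot f$, and a correct argument must produce terms involving the $B_k$'s; an expansion confined to $N_1$, $N_2$ and powers of $U_{12}$, such as yours (and, as written, the paper's own), cannot reach that conclusion.
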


  \begin{proof} 
Note that $q=2^{s}$ with $s\geqslant 2$.  If $s=2$, then $q=4$.  It is easy to check that $U_{12}^{3}=U_{12}^{2}+N_{1}N_{2}U_{12}.$ This statement  follows in this special case.
Now we suppose $s\geqslant 3$ and define $$V_{j}:=x_1^{\frac{q}{2}+1-2j}y_2^{\frac{q}{2}+1-2j}+y_1^{\frac{q}{2}+1-2j}x_2^{\frac{q}{2}+1-2j}$$ for $j=0,1,2,\dots,\frac{q}{4}$. In particular, $V_{\frac{q}{4}}=U_{12}$ and $V_{\frac{q}{4}-1}=U_{12}^{3}=U_{12}^{2}+N_{1}N_{2}U_{12}$. Then for $i=0,1,2,\dots,\frac{q}{4}-2,$ we have
\begin{eqnarray*}
V_{j} & = & x_1^{\frac{q}{2}+1-2j}y_2^{\frac{q}{2}+1-2j}+y_1^{\frac{q}{2}+1-2j}x_2^{\frac{q}{2}+1-2j} \\
 & = & (x_1^{\frac{q}{2}-1-2j}y_2^{\frac{q}{2}-1-2j}+y_1^{\frac{q}{2}-1-2j}x_2^{\frac{q}{2}-1-2j})(x_1^{2}y_2^{2}+y_1^{2}x_2^{2})+\\
 &&(y_1^{\frac{q}{2}-1-2j}x_2^{\frac{q}{2}-1-2j}x_1^{2}y_2^{2}+x_1^{\frac{q}{2}-1-2j}y_2^{\frac{q}{2}-1-2j}y_1^{2}x_2^{2})\\
 &=& V_{j+1}U_{12}^{2}+(N_{1}N_{2})^{2}V_{j+2}.
\end{eqnarray*}
Thus, $V_{0}=V_{1}U_{12}^{2}+(N_{1}N_{2})^{2}V_{2}=(V_{2}U_{12}^{2}+(N_{1}N_{2})^{2}V_{3})U_{12}^{2}+(N_{1}N_{2})^{2}V_{2}=\cdots=U_{12}^{\frac{q}{2}}+f$, where $f\in\sum_{i<\frac{q}{2}}  R'\cdot U_{12}^{i}$.
Hence,  $U_{12}^{\frac{q}{2}+1}  =  (x_1^{\frac{q}{2}}y_2^{\frac{q}{2}}+x_2^{\frac{q}{2}}y_1^{\frac{q}{2}})(x_1y_2+x_2y_1)
=(x_1^{\frac{q}{2}+1}y_2^{\frac{q}{2}+1}+y_1^{\frac{q}{2}+1}x_2^{\frac{q}{2}+1})+(x_1^{\frac{q}{2}}x_2y_1y_2^{\frac{q}{2}}+
y_1^{\frac{q}{2}}y_2x_1x_2^{\frac{q}{2}})
=V_{0}+N_{1}N_{2}V_{1}=U_{12}^{\frac{q}{2}}+f,
$ for some $f\in\sum_{i<\frac{q}{2}}  R'\cdot U_{12}^{i}$. Therefore, $U_{12}^{\frac{q}{2}+1}\in \sum_{i=0}^{\frac{q}{2}} R'\cdot U_{12}^{i}$, as desired. 
\end{proof}

 \begin{lem}\label{6.3}
For any $n\in\N^{+}$, $v_{n}:=y_1^{n}x_{2}^{n}+x_1^{n}y_{2}^{n}\in \sum_{f\in\mathcal{M}} R\cdot f.$
\end{lem}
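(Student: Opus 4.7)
The plan is to reduce $v_n$ to a polynomial in $U_{12}$ with coefficients in $R'=\mathbb{F}_q[N_1,N_2]$ via a two-term recursion, and then apply \textsc{Proposition \ref{6.2}} to cap the $U_{12}$-degree at $q/2$.

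First, I would establish by direct computation the Chebyshev-type recursion
\[
v_n = U_{12}\cdot v_{n-1} + N_1N_2\cdot v_{n-2} \qquad (n\geqslant 2),
\]
with the base values $v_0 = 0$ (since $\mathrm{char}(\Fq)=2$) and $v_1 = U_{12}$. To verify it, expand
\[
U_{12}\cdot v_{n-1} = (x_1y_2+x_2y_1)\bigl(y_1^{n-1}x_2^{n-1}+x_1^{n-1}y_2^{n-1}\bigr),
\]
which splits into $v_n$ plus the cross terms $x_1y_2y_1^{n-1}x_2^{n-1}+x_2y_1x_1^{n-1}y_2^{n-1}$; each cross term contains a factor $x_1y_1\cdot x_2y_2 = N_1N_2$, and what remains is precisely $v_{n-2}$.

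Second, induction on $n$ using this recursion shows that $v_n\in R'[U_{12}]$, i.e.\ $v_n$ is an $\mathbb{F}_q[N_1,N_2]$-linear combination of the powers $U_{12}^0,U_{12}^1,\dots,U_{12}^n$. Since $R'\subseteq R$, we obtain
\[
v_n \in \sum_{i\geqslant 0} R\cdot U_{12}^i.
\]
Finally, \textsc{Proposition \ref{6.2}} states that $U_{12}^{q/2+1}\in\sum_{i=0}^{q/2} R'\cdot U_{12}^i$, so by a trivial induction every power $U_{12}^k$ with $k>q/2$ likewise lies in $\sum_{i=0}^{q/2} R'\cdot U_{12}^i$. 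Substituting these reductions into the expression for $v_n$ yields
\[
v_n \in \sum_{i=0}^{q/2} R\cdot U_{12}^i \subseteq \sum_{f\in\mathcal{M}} R\cdot f,
\]
completing the proof.

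The only subtle point is verifying the recursion carefully in characteristic $2$; after that, the argument is essentially a combination of linear algebra over $R'$ and the reduction afforded by \textsc{Proposition \ref{6.2}}. There is no need to invoke the $B_k$'s or the products $B_iB_j$ from $\mathcal{M}$ for this particular lemma, which makes the plan quite clean.
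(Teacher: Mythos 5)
Your proposal is correct and follows essentially the same route as the paper: the identical recursion $v_n = U_{12}\cdot v_{n-1} + N_1N_2\cdot v_{n-2}$ established by the same cross-term computation, combined with \textsc{Proposition \ref{6.2}} to control the powers of $U_{12}$. Your version is slightly more explicit than the paper's (which runs the induction directly on membership in $\sum_{f\in\mathcal{M}} R\cdot f$) in first isolating the statement $v_n\in R'[U_{12}]$ and in noting that the $B_k$'s are never needed, but the substance is the same.
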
 
 
  \begin{proof} We use  induction on $n$.
If $n=1$, then $v_{1}=U_{12}$ and the lemma follows immediately. Suppose $n\geqslant 2$, then
$v_{n}=(y_1^{n-1}x_{2}^{n-1}+x_1^{n-1}y_{2}^{n-1})(y_1x_{2}+x_1y_{2})+(x_1^{n-1}y_{2}^{n-1}y_1x_{2}+
y_1^{n-1}x_{2}^{n-1}x_1y_{2})=v_{n-1}U_{12}+N_{1}N_{2}v_{n-2}.$ By the induction hypothesis and Proposition \ref{6.2}, we have
$v_{n}\in \sum_{f\in\mathcal{M}} R\cdot f.$
\end{proof}

 \begin{prop}\label{6.4}
For $1\leqslant k\leqslant i\leqslant q-2$, we have $B_{k}\cdot B_{i}\in \sum_{f\in\mathcal{M}} R\cdot f.$ 
\end{prop}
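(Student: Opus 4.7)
The plan is to expand $B_k\cdot B_i$ directly, separate the result into a ``diagonal'' piece $T_1$ (monomials pure in the $x$'s or pure in the $y$'s) and a ``cross'' piece $T_2$ (truly mixed monomials), and then reduce each piece modulo $\sum_{f\in\mathcal{M}}R\cdot f$ using the auxiliary polynomials $v_n=y_1^{n}x_2^{n}+x_1^{n}y_2^{n}$ already controlled by Lemma \ref{6.3}.

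First, a straightforward expansion, combined with the hypothesis $k\leqslant i$ (which lets me factor out $N_1^{k}$ and $N_2^{q-1-i}$ from the mixed terms), gives $B_kB_i=T_1+T_2$ with
\[
T_1=x_1^{k+i}x_2^{2q-2-k-i}+y_1^{k+i}y_2^{2q-2-k-i},\qquad T_2=N_1^{k}N_2^{q-1-i}\,v_{i-k}.
\]
If $i=k$ then $T_2=0$ in characteristic $2$; otherwise $v_{i-k}$ is handled by Lemma \ref{6.3}, and since $N_1^{k}N_2^{q-1-i}\in R$ we conclude $T_2\in\sum_{f\in\mathcal{M}}R\cdot f$. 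The problem thus reduces to placing $T_1$ in $\sum_{f\in\mathcal{M}}R\cdot f$.

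Setting $s=k+i\in[2,2q-4]$, I split on whether $s\leqslant q-1$ or $s>q-1$. In the first range, a direct multiplication (using $s\leqslant q-1$ to write $y_2^{q-1}=y_2^{q-1-s}y_2^{s}$ and $x_2^{q-1}=x_2^{q-1-s}x_2^{s}$) yields the key identity $B_s\cdot B_0=T_1+N_2^{q-1-s}v_s$, whence $T_1=B_sB_0+N_2^{q-1-s}v_s$. Here $B_0\in R$, while either $B_s\in\mathcal{M}$ (when $1\leqslant s\leqslant q-2$) or $B_s=B_{q-1}\in R$ (when $s=q-1$, in which case $B_kB_i\in\mathcal{M}$ is anyway immediate); and $N_2^{q-1-s}v_s\in R\cdot\mathcal{M}$ by Lemma \ref{6.3}. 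For the second range, put $r=s-(q-1)\in[1,q-3]$; the mirror calculation produces $B_{q-1}\cdot B_r=T_1+N_1^{r}v_{q-1-r}$, so $T_1=B_{q-1}B_r+N_1^{r}v_{q-1-r}\in R\cdot\mathcal{M}$ by the same two reasons (note $1\leqslant r\leqslant q-3\leqslant q-2$ ensures $B_r\in\mathcal{M}$). Combining with $T_2$ proves the proposition.

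The main obstacle is spotting the two ``swap'' identities $B_sB_0-T_1=N_2^{q-1-s}v_s$ and $B_{q-1}B_r-T_1=N_1^{r}v_{q-1-r}$: once one realizes that multiplying the relevant generator $B_s$ (respectively, its dual $B_{q-1}$) by a coefficient-ring element $B_0$ or $B_{q-1}\in R$ reproduces the diagonal $T_1$ up to a $v_n$-correction absorbed by Lemma \ref{6.3}, everything becomes routine bookkeeping. The case split $s\leqslant q-1$ versus $s>q-1$ is forced because the generators $B_\bullet\in\mathcal{B}$ are indexed only for $0\leqslant s\leqslant q-1$; beyond that range the symmetry between the $x$- and $y$-variables requires pivoting from the ``bottom'' invariant $B_0$ to the ``top'' invariant $B_{q-1}$.
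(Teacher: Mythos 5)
Your proof is correct and follows essentially the same route as the paper: expand $B_kB_i$ into the ``diagonal'' term plus $N_1^{k}N_2^{q-1-i}v_{i-k}$, and rewrite the diagonal term as $B_{k+i}B_0$ (resp.\ $B_{q-1}B_{k+i-(q-1)}$) plus an $N$-multiple of a $v_n$ absorbed by Lemma \ref{6.3}. The only difference is cosmetic: you spell out the identity for the case $k+i>q-1$, which the paper dismisses with ``similar arguments,'' and you explicitly note the degenerate case $i=k$ where $v_0=0$.
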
 
 
\begin{proof} If $k+i=q-1$, then $B_{k}\cdot B_{i}\in \mathcal{M}$.
Now we consider the case when $k+i<q-1$. Note that
\begin{eqnarray*}
B_{k}\cdot B_{i} & = & (x_1^{k}x_2^{q-1-k}+y_1^{k}y_2^{q-1-k})(x_1^{i}x_2^{q-1-i}+y_1^{i}y_2^{q-1-i}) \\
 & = & (x_1^{k+i}x_2^{2q-2-k-i}+y_1^{k+i}y_2^{2q-2-k-i})+(x_1^{k}x_2^{q-1-k}y_1^{i}y_2^{q-1-i}+y_1^{k}y_2^{q-1-k}x_1^{i}x_2^{q-1-i})\\
 &=& (B_{k+i}+y_1^{k+i}y_2^{q-1-k-i})x_{2}^{q-1}+(B_{k+i}+x_1^{k+i}x_2^{q-1-k-i})y_{2}^{q-1}+\\
 &&N_{1}^{k}N_{2}^{q-1-i}(x_{2}^{i-k}y_{1}^{i-k}+y_{2}^{i-k}x_{1}^{i-k})\\
 &=& B_{k+i}B_{0}+N_{2}^{q-1-k-i}(y_1^{k+i}x_2^{k+i}+x_1^{k+i}y_2^{k+i})+N_{1}^{k}N_{2}^{q-1-i}(x_{2}^{i-k}y_{1}^{i-k}+y_{2}^{i-k}x_{1}^{i-k}).
\end{eqnarray*}
By Lemma \ref{6.3}, $v_{k+i}$ and $v_{i-k}$ both belong to $\sum_{f\in\mathcal{M}} R\cdot f$, so does $B_{k}\cdot B_{i}$.
Similar arguments can be applied to the case when $k+i>q-1$.
\end{proof} 

Now we are ready to prove Theorem \ref{T2}.

  \begin{proof}[Proof of  Theorem \ref{T2}]
  Since $\mathbb{F}_{q}[2V]^{\OO}$ is Cohen-Macaulay and $\Fq[2V]^{\OO\times\OO}$ is a polynomial algebra, it follows  that 
  $\mathbb{F}_{q}[2V]^{\OO}$ is a free $\Fq[2V]^{\OO\times\OO}$-module of rank $2(q-1)=|\OO|$, see for example  \cite[Lemma 2.1]{Che14}. 
We observe  that $|\mathcal{M}|=2(q-1).$ Thus to prove Theorem \ref{T2}, we need only to show that 
for any $g\in\Fq[2V]^{\OO}$, we have $g\in \sum_{f\in\mathcal{M}} R\cdot f$.
By Example \ref{e1.5}, it is sufficient to show that $$U_{12}^{\upalpha_{0}}\cdot B_{1}^{\upalpha_{1}} \cdot B_{2}^{\upalpha_{2}}\cdots B_{q-2}^{\upalpha_{q-2}}\in \sum_{f\in\mathcal{M}} R\cdot f,$$
for any nonzero vector $\upalpha=(\upalpha_{0},\upalpha_{1},\dots,\upalpha_{q-2})\in \N^{q-1}$.
By Proposition \ref{6.1}, it suffices to show that 
  $U_{12}^{\upalpha_{0}}\in \sum_{f\in\mathcal{M}} R\cdot f$ 
  and $B_{1}^{\upalpha_{1}} \cdot B_{2}^{\upalpha_{2}}\cdots B_{q-2}^{\upalpha_{q-2}}\in \sum_{f\in\mathcal{M}} R\cdot f$
  for any $\upalpha_{0}\in\N$
  and any $(\upalpha_{1},\dots,\upalpha_{q-2})\in \N^{q-2}$. The two cases follow from 
Proposition \ref{6.2} and Proposition \ref{6.4} respectively. 
  \end{proof}

 \section{More Examples and Proof of Corollary \ref{noethernumber}} \label{sec7}
\setcounter{equation}{0}
\renewcommand{\theequation}
{7.\arabic{equation}}
\setcounter{theorem}{0}
\renewcommand{\thetheorem}
{7.\arabic{theorem}}

 \begin{exam}\label{e7.1}
 {\rm ($\mathbb{F}_{q}[3V]^{\OO}$)
 Note that in this case,
$1\leqslant|I|\leqslant m-1=2$ and $1\leqslant|J|\leqslant m-1=2$.
 Since $s\geqslant 2$, $q=2^{s}\geqslant 4$ and $q-1\geqslant 3$. Thus $|I|-|J|=0$ in $\mathcal{D}$. It follows that either
  $|I|=|J|=1$ or  $|I|=|J|=2$. Since $I<J$, we must have  $|I|=|J|=1$.
 Hence, $\mathcal{U}=\mathcal{D}$. Theorem \ref{T1} tells us that  $\mathbb{F}_{q}[3V]^{\OO}$ is generated by the following 
 invariants:
\begin{eqnarray*}
\mathcal{N} & = & \Big\{N_1,N_2,N_{3}\Big\}\\
\mathcal{U} &=& \Big\{U_{12},U_{13},U_{23}\Big\}\\
\mathcal{B} &=&\Big\{x_1^{k}x_{2}^{t}x_3^{q-1-k-t}+y_1^{k}y_{2}^{t}y_3^{q-1-k-t}\mid 0\leqslant k,t\leqslant q-1\Big\}.
\end{eqnarray*}  
It is not hard to see that $|\mathcal{B}|=q+(q-1)+(q-2)+\cdots+2+1=\frac{q(q+1)}{2}$. Thus 
$|\mathcal{S} |=|\mathcal{N} |+|\mathcal{U} |+|\mathcal{B} |=\frac{q(q+1)}{2}+6.$
For instance, when $q=4$, $|\mathcal{S}|=16$ and  when $q=8$,  $|\mathcal{S}|=42$.
 \hbo} \end{exam}

 \begin{exam}
 {\rm ($m\geqslant 4$)
For $\mathbb{F}_{q}[4V]^{\OO}$,  there are no $d_{I,J}\in \mathcal{D}$ such that 
$q-1=|J|-|I|$. However, we have one element $x_{1}x_{2}y_{3}y_{4}+y_{1}y_{2}x_{3}x_{4}\in \mathcal{D}-\mathcal{U}$. 
When $m\geqslant 5$,  for $\mathbb{F}_{q}[mV]^{\OO}$,  there exists $d_{I,J}\in \mathcal{D}$ such that 
$q-1=|J|-|I|$. For example, we take $q=2^{2}=4$ and $m=5$. Then 
$x_{1}y_{2}y_{3}y_{4}y_{5}+y_{1}x_{2}x_{3}x_{4}x_{5}\in \mathcal{D}.$ 
  \hbo} \end{exam}

 \begin{proof}[Proof of Corollary \ref{noethernumber}]
Note that $q=2^{s}\geqslant 4$.   
Proposition \ref{2.3}, Example \ref{e1.5} and Example \ref{e7.1} show that 
$\bet_{mV}(\OO)=q-1$ for $m=1,2,3$ respectively. 
Now we suppose $m>3$.
If $m\leqslant q-1$, then any generator from $\mathcal{B}$ can make $\bet_{mV}(\OO)=q-1$ holds.
If $m>q-1$ and $m=2n$ is even, then $d_{I,J}\in \mathcal{D}$ with $I=\{1,2,\dots,n\}$ and $J=\{n+1,n+2,\dots,m\}$, implies that $\bet_{mV}(\OO)=m$. If $m>q-1$ and $m$ is odd, then $m-(q-1)$ is even. We may assume that $m-(q-1)=2n$.
Then $d_{I,J}\in \mathcal{D}$ with $I=\{1,2,\dots,n\}$ and $J=\{n+1,n+2,\dots,2n,2n+1,\dots,m\}$, implies that $\bet_{mV}(\OO)=m$.
 \end{proof}

 \section{Proof of  Theorem \ref{T3}}\label{sec8}
\setcounter{equation}{0}
\renewcommand{\theequation}
{8.\arabic{equation}}
\setcounter{theorem}{0}
\renewcommand{\thetheorem}
{8.\arabic{theorem}}

\begin{proof}[Proof of Theorem \ref{T3}]
Let $\mathcal{D}_{1}=\{d_{I,J}:|J|=|I|\}$ and $\mathcal{D}_{2}=\{d_{I,J}:|J|=|I|+q-1\}$. Then $\mathcal{U}\subseteq \mathcal{D}_{1}$ and $\mathcal{D}=\mathcal{D}_{1}\cup \mathcal{D}_{2}$. 
By Theorem \ref{T1}, it is sufficient to show that any element in $\mathcal{D}$ is contained in $\mathfrak{A}$, the ideal generated by $\mathcal{N}\cup\mathcal{U}\cup\mathcal{B}$ in $\Fq[mV]$. Our arguments will be completed by showing two subcases: $\mathcal{D}_{1}\subseteq \mathfrak{A}$ and $\mathcal{D}_{2}\subseteq \mathfrak{A}$.

\textsc{Subcase 1}: For any $d_{I,J}\in \mathcal{D}_{1}$, we suppose the degree of $d_{I,J}$ is $2n$. We use induction on $n$. When $n=1$, we may write $d_{I,J}=x_{i}y_{j}+y_{i}x_{j}$ with $1\leqslant i<j\leqslant m$. Thus $d_{I,J}=U_{ij}\in \mathfrak{A}$.
Now consider any $n>1$. Let $i_{1}$ be the minimal integer in $I$ and 
$j_{1}$ be the minimal integer in $J$. Note that $i_{1}<j_{1}$, then $d_{I,J}=x_{I}\cdot y_{J}+y_{I}\cdot x_{J}=
x_{i_{1}}y_{j_{1}}(x_{I-\{i_{1}\}}\cdot y_{J-\{j_{1}\}})+y_{i_{1}}x_{j_{1}}(y_{I-\{i_{1}\}}\cdot x_{J-\{j_{1}\}})=
(U_{i_{1}j_{1}}+y_{i_{1}}x_{j_{1}})(x_{I-\{i_{1}\}}\cdot y_{J-\{j_{1}\}})+y_{i_{1}}x_{j_{1}}(y_{I-\{i_{1}\}}\cdot x_{J-\{j_{1}\}})
\equiv_{\mathfrak{A}}(y_{i_{1}}x_{j_{1}})\cdot d_{I-\{i_{1}\},J-\{j_{1}\}}$. Since $d_{I-\{i_{1}\},J-\{j_{1}\}}$ has degree $2(n-1)$, the induction hypothesis implies that $d_{I-\{i_{1}\},J-\{j_{1}\}}\in \mathfrak{A}$. Thus $d_{I,J}\in \mathfrak{A}$.

\textsc{Subcase 2}: For any $d_{I,J}\in \mathcal{D}_{2}$, we let  $I=\{i_{1},\dots,i_{k}\}$ and $J=\{j_{1},\dots,j_{k},j_{k+1},\dots,j_{k+q-1}\}$, where $1\leqslant i_{1}<\cdots<i_{k}<j_{1}<\cdots<j_{k+q-1}\leqslant m$.
We may write 
\begin{eqnarray*}
d_{I,J}&=&\prod_{i=i_{1}}^{i_{k}} x_{i}\cdot \prod_{j=j_{1}}^{j_{k}} y_{j} \cdot \prod_{j=j_{k+1}}^{j_{k+q-1}} y_{j}+\prod_{i=i_{1}}^{i_{k}} y_{i}\cdot \prod_{j=j_{1}}^{j_{k}} x_{j} \cdot \prod_{j=j_{k+1}}^{j_{k+q-1}} x_{j}\\
 & = & \prod_{i=i_{1}}^{i_{k}} x_{i}\cdot \prod_{j=j_{1}}^{j_{k}} y_{j} \cdot \prod_{j=j_{k+1}}^{j_{k+q-1}} y_{j}+\prod_{i=i_{1}}^{i_{k}} y_{i}\cdot \prod_{j=j_{1}}^{j_{k}} x_{j} \cdot (B_{\overline{1}}+\prod_{j=j_{k+1}}^{j_{k+q-1}} y_{j})\\
 &\equiv& \Big(\prod_{i=i_{1}}^{i_{k}} x_{i}\cdot \prod_{j=j_{1}}^{j_{k}} y_{j}+\prod_{i=i_{1}}^{i_{k}} y_{i}\cdot \prod_{j=j_{1}}^{j_{k}} x_{j}\Big) \cdot \prod_{j=j_{k+1}}^{j_{k+q-1}} y_{j}, \quad \textrm{ mod }(\mathfrak{A}).
\end{eqnarray*}
Since $\prod_{i=i_{1}}^{i_{k}} x_{i}\cdot \prod_{j=j_{1}}^{j_{k}} y_{j}+\prod_{i=i_{1}}^{i_{k}} y_{i}\cdot \prod_{j=j_{1}}^{j_{k}} x_{j}\in
\mathcal{D}_{1}$, it follows from the first subcase that $d_{I,J}\in \mathfrak{A}$. Therefore,
$\mathcal{D}_{2}\subseteq \mathfrak{A}$, completing the proof. 
 \end{proof}

  \section{Remarks on $\Oo$ and $\Ir$}\label{sec9}
\setcounter{equation}{0}
\renewcommand{\theequation}
{9.\arabic{equation}}
\setcounter{theorem}{0}
\renewcommand{\thetheorem}
{9.\arabic{theorem}}

\noindent In this last section, we discuss $\Oo$ and $\Ir$. To our knowledge, there are no suitable references concerning a detailed description for  generators of the group $\Oo$ in terms of  matrix language. 
 
First of all, we need to find out a set of generators for $\Oo$, which is more complicated than the case of $\OO$.
Let  $\begin{pmatrix}
    a  & b   \\
     c &  d
\end{pmatrix}\in \Oo$ be any element. By the definition, we have
\begin{eqnarray*}
\begin{pmatrix}
    a  & b   \\
     c &  d
\end{pmatrix}\begin{pmatrix}
   w  & 1   \\
     0&  w
\end{pmatrix}\begin{pmatrix}
    a  & c   \\
     b &  d
\end{pmatrix}-\begin{pmatrix}
   w  & 1   \\
     0&  w
\end{pmatrix}&=&\begin{pmatrix}
   aw  & a+bw   \\
    cw& c+dw 
\end{pmatrix}\begin{pmatrix}
    a  & c   \\
     b &  d
\end{pmatrix}-\begin{pmatrix}
   w  & 1   \\
     0&  w
\end{pmatrix}\\
&=&\begin{pmatrix}
  a^{2}w+b^{2}w+w+ab    & acw+bdw+ad+1   \\
  acw+bdw+bc   &    c^{2}w+d^{2}w+w+cd
\end{pmatrix}
\end{eqnarray*}  
is an alternating matrix, i.e.,   
\begin{eqnarray}
  a^{2}w+b^{2}w+w+ab &=& 0 \label{eq9.1}\\
 c^{2}w+d^{2}w+w+cd &=&0 \label{eq9.2}\\
ad+bc+1 &=& 0. \label{eq9.3}
\end{eqnarray}

\textsc{Case} 1. Suppose that  $a=0$, it follows from (\ref{eq9.1}) and Eq. (\ref{eq9.3})  that $b^{2}=1$ and $c=b$. 
Since $b\in \mathbb{F}_{q}^{\times}$ and the order of $\mathbb{F}_{q}^{\times}$ is odd, we have
$b=c=1$.
It follows from  (\ref{eq9.2}) that 
$d^{2}w=d.$ If $d=0$, we obtain an orthogonal matrix 
$$\sig:=\begin{pmatrix}
     0 & 1   \\
     1 &  0
\end{pmatrix}.$$
 If $d\neq 0$, then $d=w^{-1}$ and we have another orthogonal matrix
 $$\uptau_{0}:=\begin{pmatrix}
     0 & 1   \\
     1 &  w^{-1}
\end{pmatrix}.$$
  
\textsc{Case} 2. 
Suppose  $a\neq0$,  it follows from  (\ref{eq9.3})  that $d=\frac{bc+1}{a}$. Combining  (\ref{eq9.2}) and (\ref{eq9.1}), we have
\begin{equation}
\label{eq9.4}
 a^{2}w+c^{2}w+w+ac = 0.
\end{equation}
Adding  (\ref{eq9.4}) to  (\ref{eq9.1}), we obtain 
$(b^{2}+c^{2})w= a(b+c).$
 If $b=c$, we have a family of orthogonal matrices
$$\uptau_{a}:=\begin{pmatrix}
   a   & b   \\
    b  &  a+bw^{-1}
\end{pmatrix},
  $$ and if $b\neq c$ we have
$$\varepsilon_{a}:=\begin{pmatrix}
   a   & b   \\
    aw^{-1}+b  &  a
\end{pmatrix}
  $$ 
where $b$ is defined by $a^{2}w+b^{2}w+w+ab = 0$. Note that $\varepsilon_{b}=\sig\cdot \uptau_{a}$ for all $a\in \mathbb{F}_{q}$. Thus $\Oo$  consists of the following matrices:
$\Big\{1,\sig,\uptau_{a},\sig\cdot\uptau_{a}\mid a\in \mathbb{F}_{q} \Big\}.$

Secondly, we consider  the invariant ring $\Ir$.
Magma calculations \cite{BCP97} suggest that $\Fq[V]^{\Oo}=\mathbb{F}_{q}[x,y]^{\Oo}$
might be a polynomial algebra with two generators $Q$ and $E$, of degrees $2$ and $q+1$ respectively.
 We define 
 \begin{eqnarray}
E & := & xy^{q}+x^{q}y \\
Q & := & {\rm Tr}^{\Oo}(x^{2}).
\end{eqnarray}
We \textit{claim} that $\mathbb{F}_{q}[V]^{\Oo}=\mathbb{F}_{q}[E,Q].$
Since $|\Oo|=\textrm{deg}(E)\cdot\textrm{deg}(Q)$, we only need to show that the Jacobian determinant
$$\textrm{det}\begin{pmatrix}
  \frac{\partial E}{\partial x}    &    \frac{\partial E}{\partial y}  \\
  \frac{\partial Q}{\partial x}    &    \frac{\partial Q}{\partial y}
\end{pmatrix}\neq 0,$$
by Kemper \cite[Proposition 16]{Kem96}. 
  We write $Q=x^{2}+uxy+vy^{2}$ for some $u,v\in \mathbb{F}_{q}$. Since $Q$ is  an $\Oo$-invariant, a simple computation shows that $u\neq 0$. Thus 
  $$\textrm{det}\begin{pmatrix}
  \frac{\partial E}{\partial x}    &    \frac{\partial E}{\partial y}  \\
  \frac{\partial Q}{\partial x}    &    \frac{\partial Q}{\partial y}
\end{pmatrix}=\begin{pmatrix}
     y^{q} & x^{q}   \\
    uy  &  ux
\end{pmatrix}=u\cdot E\neq 0,$$ which shows the claim.  For the case $m=2$ and some small $q$, Magma calculations \cite{BCP97} suggest that $\Fq[2V]^{\Oo}$ can be generated by
$q+5$ invariants: $N'_1,N'_2,U_{12}$, and $B'_{k}$
for $0\leqslant k\leqslant q+1$.  
This evidence suggests that the approach used in the calculation of $\IR$ might be applied to 
study the invariant ring $\Ir$.

 \section*{Acknowledgments}
  
 \noindent This research was done during the author's visit at Queen's University at Kingston, Canada in 2014--2016. The author would like to thank David L. Wehlau for his support,  conversations and careful reading the draft of this paper.  
 The author thanks the anonymous referee for his/her helpful comments.
 This work was partially supported by the Fundamental Research Funds for the Central Universities (2412017FZ001), NSF of China (11401087),  and NSERC.
The symbolic computation language MAGMA \cite{BCP97} (http://magma.maths.usyd.edu.au/)  was very helpful.

\begin{bibdiv}
  \begin{biblist}

\bib{BK11}{article}{
   author={Bonnaf\'{e}, C\'{e}dric},
   author={Kemper, Gregor},
   title={Some complete intersection symplectic quotients in positive
   characteristic: invariants of a vector and a covector},
   journal={J. Algebra},
   volume={335},
   date={2011},
   pages={96--112},
}

\bib{BCP97}{article}{
   author={Bosma, Wieb},
   author={Cannon, John},
   author={Playoust, Catherine},
   title={The Magma algebra system. I. The user language},
   journal={J. Symbolic Comput.},
   volume={24},
   date={1997},
   number={3-4},
   pages={235--265},
}

\bib{CHP91}{article}{
   author={Campbell, H. E. A.},
   author={Hughes, Ian},
   author={Pollack, R. D.},
   title={Rings of invariants and $p$-Sylow subgroups},
   journal={Canad. Math. Bull.},
   volume={34},
   date={1991},
   number={1},
   pages={42--47},
}

\bib{CH97}{article}{
   author={Campbell, H. E. A.},
   author={Hughes, Ian},
   title={Vector invariants of $U_2(\bold F_p)$: a proof of a conjecture of
   Richman},
   journal={Adv. Math.},
   volume={126},
   date={1997},
   number={1},
   pages={1--20},
}

\bib{CSW10}{article}{
   author={Campbell, H. E. A.},
   author={Shank, R. James},
   author={Wehlau, David L.},
   title={Vector invariants for the two-dimensional modular representation
   of a cyclic group of prime order},
   journal={Adv. Math.},
   volume={225},
   date={2010},
   number={2},
   pages={1069--1094},
}

\bib{CW11}{book}{
   author={Campbell, H. E. A.},
   author={Wehlau, David L.},
   title={Modular invariant theory},
   series={Encyclopaedia of Mathematical Sciences},
   volume={139},
   publisher={Springer-Verlag, Berlin},
   date={2011},
}

\bib{CW14}{article}{
   author={Campbell, H. E. A.},
   author={Wehlau, David L.},
   title={The second main theorem vector for the modular regular
   representation of $C_2$},
   journal={Adv. Math.},
   volume={252},
   date={2014},
   pages={641--651},
}

\bib{Che14}{article}{
   author={Chen, Yin},
   title={On modular invariants of a vector and a covector},
   journal={Manuscripta Math.},
   volume={144},
   date={2014},
   number={3-4},
   pages={341--348},
}

\bib{Che18}{article}{
   author={Chen, Yin},
   title={Modular invariants of finite affine linear groups},
   journal={Bull. Braz. Math. Soc. (N.S.)},
   volume={49},
   date={2018},
   number={1},
   pages={57--72},
}

\bib{CW17}{article}{
   author={Chen, Yin},
   author={Wehlau, David L.},
   title={Modular invariants of a vector and a covector: a proof of a
   conjecture of Bonnaf\'{e} and Kemper},
   journal={J. Algebra},
   volume={472},
   date={2017},
   pages={195--213},
}

\bib{DK02}{book}{
   author={Derksen, Harm},
   author={Kemper, Gregor},
   title={Computational invariant theory},
   series={Invariant Theory and Algebraic Transformation Groups, I},
   publisher={Springer-Verlag, Berlin},
   date={2002},
}

\bib{Kem96}{article}{
   author={Kemper, Gregor},
   title={Calculating invariant rings of finite groups over arbitrary
   fields},
   journal={J. Symbolic Comput.},
   volume={21},
   date={1996},
   number={3},
   pages={351--366},
}

\bib{NS02}{book}{
   author={Neusel, Mara D.},
   author={Smith, Larry},
   title={Invariant theory of finite groups},
   series={Mathematical Surveys and Monographs},
   volume={94},
   publisher={American Mathematical Society, Providence, RI},
   date={2002},
}

\bib{Ric90}{article}{
   author={Richman, David R.},
   title={On vector invariants over finite fields},
   journal={Adv. Math.},
   volume={81},
   date={1990},
   number={1},
   pages={30--65},
}

\bib{SW02}{article}{
   author={Shank, R. James},
   author={Wehlau, David L.},
   title={Computing modular invariants of $p$-groups},
   journal={J. Symbolic Comput.},
   volume={34},
   date={2002},
   number={5},
   pages={307--327},
}

\bib{Sym11}{article}{
   author={Symonds, Peter},
   title={On the Castelnuovo-Mumford regularity of rings of polynomial
   invariants},
   journal={Ann. of Math. (2)},
   volume={174},
   date={2011},
   number={1},
   pages={499--517},
}

\bib{TW06}{article}{
   author={Tang, Zhongming},
   author={Wan, Zhexian},
   title={A matrix approach to the rational invariants of certain classical
   groups over finite fields of characteristic two},
   journal={Finite Fields Appl.},
   volume={12},
   date={2006},
   number={2},
   pages={186--210},
}

\bib{Wan93}{book}{
   author={Wan, Zhexian},
   title={Geometry of classical groups over finite fields},
   publisher={Studentlitteratur, Lund; Chartwell-Bratt Ltd., Bromley},
   date={1993},
}

\bib{Weh13}{article}{
   author={Wehlau, David L.},
   title={Invariants for the modular cyclic group of prime order via
   classical invariant theory},
   journal={J. Eur. Math. Soc.},
   volume={15},
   date={2013},
   number={3},
   pages={775--803},
}

\bib{Wey97}{book}{
   author={Weyl, Hermann},
   title={The classical groups},
   series={Princeton Landmarks in Mathematics},
   publisher={Princeton University Press, Princeton, NJ},
   date={1997},
}

  \end{biblist}
\end{bibdiv}
\raggedright
\end{document}